\let\SavedRightarrow=\Rightarrow 
\let\Rightarrow=\SavedRightarrow
\newcommand{\COZ}{\operatorname{\textit{coZ}}}
\newcommand{\Seq}{\operatorname{\textit{Seq}}}
\newcommand{\Aaa }{\mathcal A}
\newcommand{\Raa }{\mathcal R}
\newcommand{\Bee }{\mathcal B}
\newcommand{\Cee }{\mathcal C}
\newcommand{\Dee }{\mathcal D}
\newcommand{\Wee }{\mathcal W}
\newcommand{\Vee }{\mathcal V}
\newcommand{\Jee }{\mathbb J}
\newcommand{\Pee }{\mathcal P}
\newcommand{\Uee }{\mathcal U}
\newcommand{\See }{\mathcal S}
\newcommand{\w}{\operatorname{w}}
\renewcommand{\int}{\operatorname{Int}}
\newtheorem{thm}{Theorem}
\newtheorem{pro}[thm]{Proposition}
\newtheorem{lem}[thm]{Lemma}
\newtheorem{cor}[thm]{Corollary}
\begin{document}

\title{Game theoretic approach to   skeletally Dugundji  and Dugundji spaces}

\author{A. Kucharski}
\address{Institute of Mathematics, University of Silesia, Bankowa 14, 40-007 Katowice}
\email{akuchar@ux2.math.us.edu.pl}

\author{Sz. Plewik}
\address{Institute of Mathematics, University of Silesia, Bankowa 14, 40-007 Katowice}
\email{plewik@math.us.edu.pl}

\author{V. Valov}
\address{Department of Computer Science and Mathematics, Nipissing University,
100 College Drive, P.O. Box 5002, North Bay, ON, P1B 8L7, Canada}
\email{veskov@nipissingu.ca}
\thanks{The third author was supported by NSERC Grant 261914-13}

\keywords{Club, Dugundji spaces; multiplicative lattice of maps,  skeletally Dugundji spaces,}

\subjclass{Primary 54B35; Secondary 54C10, 91A44}

\begin{abstract}
Characterizations of skeletally Dugundji spaces and Dugundji spaces are given in terms of club collections, consisting of countable families of co-zero sets. For example, a Tychonoff space $X$ is skeletally Dugundji  if and only if  there exists  an additive $c$-club on $X$. Dugundji spaces are characterized by the existence of additive $d$-clubs.

\end{abstract}

\maketitle
\markboth{}{Game characterization  of  skeletally  Dugundji spaces}

\section{Introduction}

The aim of this paper is to provide characterizations of Dugundji and skeletally Dugundji spaces in terms of game theory. Recall that Dugundji spaces were introduced in \cite{p} and their topological description as compacta with a multiplicative lattice of open maps was obtained in \cite{s76} and \cite{s81}.
Skeletally Dugundji spaces are skeletal analogue of Dugundji spaces: a Tychonoff space is skeletally Dugundji if it has a multiplicative lattice of skeletal maps, see \cite{kpv1}. This paper is  a continuation of the topics initiated in \cite{kpv} and \cite{kpv1}.

Our key notion of additive clubs consisting of co-zero sets was inspired by condition $(6)$ from \cite[Theorem 5.3.9]{hsh},
 which  characterizes weakly projective Boolean algebras (this condition was attributed to T. Jech).
Together with methods developed in \cite{kp8} this allows  to expand  the study of weakly projective
Boolean algebra (or equivalently its Stone spaces)  to much wider classes, namely Tychonoff spaces.
Another general idea we follow is the possibility to describe some Tychonoff spaces using inverse systems consisting of "nice" bounding maps and second countable spaces.
This method is used in  papers \cite{kp8}, \cite{kp9}  and \cite{kpv}, where "nice" maps are surjective skeletal, open or d-open, and inverse systems are constructed by altering well matched collections of countable families of open sets, which is called club  families. For a certain club family of open sets one can associate a version of the open-open game. This procedure was initiated in \cite{dkz}, where the existence of an appropriate club family  is equivalent to the existence of a winning strategy for player I in the open-open game. It turns out that proofs from \cite{dkz}, \cite{kp8}, \cite{kp9}, \cite{kpv}  and  \cite{va2} dared us to introduce the concept of additive club collection.

The open-open game was introduced by P. Daniels, K. Kunen  and H.~Zhou \cite{dkz}. If a space $X$ is Tychonoff, then we can assume that two players are playing with co-zero sets. Thus, there are two players who take turns playing: a round consists of player I choosing a nonempty set $U\in\COZ(X)$ and player II choosing a nonempty $V\in\COZ(X)$ with
$V\subseteq U$.
 Player I wins if the union of II's open sets is dense in $X$, otherwise player II wins. In the paper \cite{dkz},  there are a few  equivalent  descriptions of the open-open game.
  According to \cite[Theorem 1.6]{dkz}, $X$ is I-favorable if and only if the family $$\{\Pee\in [\COZ(X)]^{\omega}:\Pee\subset_c\COZ(X)\}$$ contains a club, here $\Pee\subset_c\COZ(X)$ means that $\Pee$ is completely embedded in $\COZ(X)$, compare \cite[p. 208]{dkz}.
In fact, this theorem provides a strategy how  player I, knowing the club family, can win the open-open game, and vice versa. Because of this interpretation, the question, whether player I  can use a club family with some additional features, arises naturally.
We describe the cases when player I  constructs a lattice of skeletal (open or $d$-open) map despite the choices of  player II bothers.

If $\Aaa$ is a family of sets, then $\langle \Aaa \rangle$ denotes  the least family which contains $\Aaa$ and it is closed under finite intersections and finite union.
By spaces and maps we mean, respectively, infinite Tychonoff spaces and continuous maps.
In accordance with J. Mioduszewski and L. Rudolf \cite{mr},  a map $f:X\to Y$ is said to be \textit{skeletal}  (resp., \textit{$d$-open} in the sense of M.~Tkachenko, \cite{tk}) if $\int\overline{f[U]}\neq\varnothing$ (resp., $f[U]\subseteq\int\overline{f[U]}$) for every nonempty open $U\subseteq X$. Obviously, every $d$-open map is skeletal.
Moreover, any $d$-open map between  compact Hausdorff spaces  is always open, see \cite{tk}. We say that two maps $f:X\to Y$ and $g:X\to Z$ are {\em homeomorphic} if there is a homeomorphism  $h:Z\to Y$ such that $f=h\circ g$.
Let us also mention that weight of a given space $X$, denoted by $\w(X)$, is always an infinite cardinal. Also, a diagonal product of maps will be briefly called a diagonal.  We recommend the book \cite{eng} and the paper \cite{dkz} for undefined  notions.

\section{Maps constructed using the property $\Seq$}
Let $\Pee$ be a family of open subsets of a topological space $X$. For every $x\in X$ consider the set
$$[x]_{\Pee}=\{y\in X: y\in V \Leftrightarrow x\in V \hbox{~}\text{for all}\hbox{~} V\in \Pee\}.$$ Let $X{/\Pee}$  be  the family of all  classes $[x]_{\Pee}$ and $q_\Pee :X\to X/{\Pee}$ be the map $x \mapsto [x]_\Pee$.
The topology on $X{/\Pee}$ is  generated  by  the  sets $q_\Pee[V]=\{ [x]_\Pee: x \in V \}$, where $V\in \Pee$. 

We have the following fact, see \cite[Lemma 1]{kp8}.

\begin{lem}\label{l4} Let $\Pee$ be a family of open subsets of  a topological space $X$ such that  $X=\bigcup\Pee$. If $\Pee$ is  closed under finite intersections, then    the family $\{q_\Pee[V]: V\in \Pee\}$ is a base for $X{/\Pee}$ and the map $q_\Pee$ is continuous.
\end{lem}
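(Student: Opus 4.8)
The plan is to reduce both assertions to a single identity: for every $V\in\Pee$,
\[
q_\Pee^{-1}\bigl[q_\Pee[V]\bigr]=V .
\]
First I would verify this. The inclusion $V\subseteq q_\Pee^{-1}\bigl[q_\Pee[V]\bigr]$ is immediate, since $x\in V$ forces $[x]_\Pee\in q_\Pee[V]$. For the reverse inclusion, suppose $q_\Pee(x)\in q_\Pee[V]$, that is, $[x]_\Pee=[y]_\Pee$ for some $y\in V$. By the very definition of the classes, $x$ and $y$ lie in exactly the same members of $\Pee$; since $V\in\Pee$ and $y\in V$, this yields $x\in V$. Hence the displayed equality holds. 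Equivalently, for each $V\in\Pee$ and each $x\in X$ one has $[x]_\Pee\in q_\Pee[V]$ if and only if $x\in V$.

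Continuity of $q_\Pee$ then follows formally. The sets $q_\Pee[V]$, $V\in\Pee$, generate the topology of $X/\Pee$, so it suffices to check that the $q_\Pee$-preimage of each such generator is open in $X$, because preimages commute with arbitrary unions and finite intersections. But the identity above says precisely that this preimage equals $V$, which is open. Therefore $q_\Pee$ is continuous.

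For the base assertion I would use the identity to compute intersections. Given $V_1,V_2\in\Pee$, the characterization that $[x]_\Pee\in q_\Pee[V]$ holds exactly when $x\in V$ gives
\[
q_\Pee[V_1]\cap q_\Pee[V_2]=q_\Pee[V_1\cap V_2].
\]
This is the step where the hypothesis that $\Pee$ is closed under finite intersections is used: it guarantees $V_1\cap V_2\in\Pee$, so the right-hand side is again a member of the family $\{q_\Pee[V]:V\in\Pee\}$. By induction this family is closed under finite intersections. Moreover, since $X=\bigcup\Pee$ and $q_\Pee$ is onto, $\bigcup_{V\in\Pee}q_\Pee[V]=q_\Pee[X]=X/\Pee$, so the family covers $X/\Pee$. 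A covering family closed under finite intersections is automatically a base for the topology it generates, which yields the first claim.

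The only nontrivial point is the identity $q_\Pee^{-1}\bigl[q_\Pee[V]\bigr]=V$; everything else is formal manipulation with preimages and intersections. I expect the main (modest) obstacle to be ensuring that the reverse inclusion genuinely exploits $V\in\Pee$ rather than some weaker condition: it is exactly the requirement that $V$ itself be one of the separating sets defining the equivalence that forces $x\in V$, and the same role of membership in $\Pee$ reappears when identifying $V_1\cap V_2$ as a member of the family.
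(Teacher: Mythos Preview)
Your proof is correct. The paper itself does not give a proof of this lemma; it merely cites it from \cite[Lemma~1]{kp8}, so there is no in-paper argument to compare against. Your approach via the identity $q_\Pee^{-1}\bigl[q_\Pee[V]\bigr]=V$ for $V\in\Pee$ is the natural one, and indeed the paper invokes exactly this identity later (e.g., in the proofs of Theorem~\ref{t5} and Proposition~\ref{t88}) without further comment.
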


For every family $\Aaa$ of open subsets of $X$ we define $\left\langle \Aaa\right\rangle= \bigcup \{\left\langle \Aaa \right\rangle^n: 0\leq n \}$, where
$\left\langle\Aaa\right\rangle^0=\Aaa$ and $\left\langle\Aaa\right\rangle^{k+1}$ is equal to $$\left\langle\Aaa\right\rangle^{k}\cup\{\{\cup \Vee, \cap \Vee\}: \Vee \subseteq \left\langle \Aaa\right\rangle^k  \mbox{ is a finite family}\}.$$

We get the following.
\begin{thm}\label{t5}
Let $\Pee=\left\langle\bigcup\{\Pee_\alpha:\alpha\in\Lambda\}\right\rangle$ with each $\Pee_\alpha$ being an open cover of $X$ closed under finite intersections and finite unions such that $X{/\Pee_\alpha}$ is a Tychonoff space.
Then $X/\Pee$ is also Tychonoff and the map $q_\Pee$ is homeomorphic to the diagonal map  $q=\triangle\{q_{\Pee_\alpha}:\alpha\in\Lambda\}$.
\end{thm}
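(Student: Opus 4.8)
The plan is to show first that the maps $q_\Pee$ and $q$ partition $X$ into exactly the same fibres, and then to upgrade this set-theoretic coincidence into a homeomorphism by comparing the canonical bases supplied by Lemma~\ref{l4}. Write $\mathcal G=\bigcup\{\Pee_\alpha:\alpha\in\Lambda\}$, so that $\Pee=\langle\mathcal G\rangle$. The central step is the claim that for $x,y\in X$ one has $[x]_\Pee=[y]_\Pee$ if and only if $[x]_{\Pee_\alpha}=[y]_{\Pee_\alpha}$ for every $\alpha$, equivalently $q_\Pee(x)=q_\Pee(y)\iff q(x)=q(y)$. Since $\mathcal G\subseteq\Pee$, the left-to-right implication is immediate. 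For the converse I would call a set $A\subseteq X$ \emph{$\mathcal G$-invariant} if membership in $A$ is constant on each class of the relation ``$z\in W\Leftrightarrow z'\in W$ for all $W\in\mathcal G$''. Every member of $\mathcal G$ is trivially $\mathcal G$-invariant, and $\mathcal G$-invariant sets are plainly closed under finite unions and finite intersections, since membership in a finite union or intersection is determined by membership in the constituents. A straightforward induction along $\langle\mathcal G\rangle=\bigcup_n\langle\mathcal G\rangle^n$ then shows that every $V\in\Pee$ is $\mathcal G$-invariant, which is precisely the missing implication.

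Granting this, define $h\colon\im q\to X/\Pee$ by $h(q(x))=q_\Pee(x)$. The fibre coincidence makes $h$ a well-defined bijection and yields $q_\Pee=h\circ q$, so $h$ is the only candidate for the required homeomorphism. Before checking continuity I would record two bookkeeping facts, both immediate from the definition of the relations: every $W\in\Pee$ is a union of $\sim_\Pee$-classes, whence $q_\Pee^{-1}(q_\Pee[W])=W$ (and likewise $q_{\Pee_\alpha}^{-1}(q_{\Pee_\alpha}[W])=W$ for $W\in\Pee_\alpha$); and, as a consequence, the operation $W\mapsto q_\Pee[W]$ commutes with finite unions and finite intersections on $\Pee$. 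By Lemma~\ref{l4}, applicable because $\Pee$ and each $\Pee_\alpha$ are closed under finite intersections and cover $X$, the families $\{q_\Pee[V]:V\in\Pee\}$ and $\{q_{\Pee_\alpha}[W]:W\in\Pee_\alpha\}$ are bases for $X/\Pee$ and $X/\Pee_\alpha$, and the maps $q_\Pee$, $q_{\Pee_\alpha}$ and hence the diagonal $q$ are continuous.

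Now the homeomorphism is read off these bases. For $W\in\Pee_\alpha$ the saturation identities give $h^{-1}(q_\Pee[W])=\pr_\alpha^{-1}(q_{\Pee_\alpha}[W])\cap\im q$, so a subbasic open set of $\im q$ corresponds to the generator $q_\Pee[W]$ of $X/\Pee$. Since $h$ is a bijection and both $q_\Pee[\cdot]$ and the preimage/restriction operations respect finite unions and intersections, this identity propagates to every $V\in\Pee$, proving $h$ continuous. Conversely, the sets $\bigcap_{i=1}^{n}\pr_{\alpha_i}^{-1}(q_{\Pee_{\alpha_i}}[W_i])\cap\im q$ with $W_i\in\Pee_{\alpha_i}$ form a base of $\im q$, and $h$ carries such a set onto $\bigcap_{i=1}^{n}q_\Pee[W_i]=q_\Pee\big[\bigcap_{i=1}^{n}W_i\big]$, which is basic open in $X/\Pee$ because $\bigcap_i W_i\in\Pee$; hence $h$ is open and therefore a homeomorphism. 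Then $q_\Pee=h\circ q$ exhibits $q_\Pee$ as homeomorphic to $q$, and $X/\Pee\cong\im q$ embeds in $\prod_{\alpha\in\Lambda}X/\Pee_\alpha$, a product of Tychonoff spaces, so $X/\Pee$ is Tychonoff.

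I expect the only genuine obstacle to be the reconciliation of the two topologies: the topology generated on $X/\Pee$ by the full family $\{q_\Pee[V]:V\in\Pee\}$, whose members $V$ involve finite \emph{unions} as well as intersections of sets drawn from different $\Pee_\alpha$, must match the product topology on $\im q$, which a priori only sees finite intersections of single coordinates. The invariance argument of the first step is what reconciles them, and the care needed is to confirm that the union-type generators $q_\Pee[V]$ produce nothing beyond open sets already present in $\im q$; once the fibre coincidence and the two saturation facts are in place, the remainder is routine base-chasing.
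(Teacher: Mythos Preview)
Your proof is correct and follows essentially the same approach as the paper: establish the fibre coincidence $q_\Pee(x)=q_\Pee(y)\iff q(x)=q(y)$ by induction along $\langle\mathcal G\rangle^n$, then verify that the resulting bijection is a homeomorphism by comparing the canonical bases, again via an inductive propagation through finite unions and intersections. The only cosmetic difference is that you define $h\colon\im q\to X/\Pee$ while the paper defines it in the opposite direction, and you package the first induction as ``$\mathcal G$-invariance'' whereas the paper argues directly; neither affects the substance.
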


\begin{proof} Let $\Aaa=\bigcup\{\Pee_\alpha:\alpha\in\Lambda\}$. Then $\Pee=\bigcup\{\left\langle\Aaa\right\rangle^m: 0\leq m \}$.

\textit{Claim $1$. For any $x,y\in X$ we have $q_\Pee(x)=q_\Pee(y)$ if and only if $q(x)=q(y)$.}

Indeed, since $\Pee_\alpha\subseteq\Pee$, $q_\Pee(x)=q_\Pee(y)$ implies $q_{\Pee_\alpha}(x)=q_{\Pee_\alpha}(y)$ for each $\alpha$. So, $q(x)=q(y)$. Now, suppose $q(x)=q(y)$ for some $x,y\in X$. This yields $q_{\Pee_\alpha}(x)=q_{\Pee_\alpha}(y)$ for each $\alpha$. Hence, $x\in U$ if and only if  $y\in U$ for all $U\in\Aaa$. Using induction, we are going to show that $x\in W$ if and only if  $y\in W$ for all $W\in\left\langle\Aaa\right\rangle^{m}$, where  $0\leq m .$ Suppose this is true for some $k$ and let $W\in\left\langle\Aaa\right\rangle^{k+1}\setminus \left\langle\Aaa\right\rangle^{k}$. Then  there exists a finite family $\{U_1, U_2, \ldots ,U_n\} \subseteq\left\langle\Aaa\right\rangle^{k}$ such that either $W=\bigcup\{U_j: 1\leq j \leq n\}$ or $W=\bigcap \{U_j: 1\leq j \leq n\}$. Because of our inductive assumption, we have $x\in W$ is equivalent to $y\in W$. In this way we conclude that $x\in V$ if and only if  $y\in V$ for all $V\in\Pee$. Consequently, $q_\Pee(x)=q_\Pee(y)$.

So, according to Claim 1, there is a bijective map $h\colon X/\Pee\to q[X]$, $h([x])=(q_{\Pee_\alpha}(x))_{\alpha\in\Lambda}$. We are going to show that $h$ is a homeomorphism. To this end, note that each $Y_\alpha=X/\Pee_\alpha$ is a Tychonoff space and $\Bee_\alpha=\{q_{\Pee_\alpha}[U]:U\in\Pee_\alpha\}$ is a base for the topology of $Y_\alpha$, see Lemma \ref{l4}. So, all set of the form
$$G=\left(q_{\Pee_{\alpha_1}}[U_1]\times \ldots \times q_{\Pee_{\alpha_k}} [U_k]\times\prod_{\alpha \in \Lambda \setminus \{\alpha_i: 1\leq i \leq k\}}  Y_\alpha\right) \cap q[X],$$ where $U_i\in\Pee_{\alpha_i}$,  substitute a base $\Bee$ for $q[X]$. Moreover, each
$$q^{-1}(G)=\bigcap \{q_{\Pee_{\alpha_i}}^{-1}(q_{\Pee_{\alpha_i}}[U_i]): i=1, \ldots, k\} = \bigcap \{ U_i: i=1, \ldots, k\}$$ belongs to $\Pee= \left\langle \Aaa\right\rangle$. Hence, $q_\Pee [q^{-1}(G)]$ is open in $X/\Pee$. This means that $h$ is continuous.

Before proving the final step, observe that $q^{-1}(q[U])=U$ and the image $q[U]$ is open in the space $q[X]$, for all $U\in\Aaa$. Indeed, if $U\in\Pee_\alpha$, then
$q_{\Pee_{\alpha}}[U]$ is open in $Y_\alpha$ and $q_{\Pee_{\alpha}}^{-1}(q_{\Pee_{\alpha}}[U])=U$. Since $\pi_\alpha\circ q=q_{\Pee_{\alpha}}$, where
$\pi_\alpha: q[X]\to Y_\alpha$ denotes the $\alpha$'th projection, we check $$q[U]=\pi_\alpha^{-1}(q_{\Pee_{\alpha}}[U]) \mbox{ and get } q^{-1}(q[U])=q_{\Pee_{\alpha}}^{-1}(q_{\Pee_{\alpha}}[U])=U.$$
\indent
\textit{Claim $2$. $q^{-1}(q[U])=U$ and $q[U]$ is open in $q[X]$, for all $U\in\Pee$.}

In the previous part of the proof, we observed that this is true for all $U\in\Aaa$. We are going to prove by induction this is also true for all $U\in\left\langle\Aaa\right\rangle^{m}$, where  $m>0$. Suppose our statement holds for some $m$. As in the proof of Claim 1, any $W\in\left\langle\Aaa\right\rangle^{m+1}\setminus \left\langle\Aaa\right\rangle^{m}$ is of the form $W=\bigcup\{U_j: 1\leq j \leq n\}$ or $W=\bigcap\{U_j: 1\leq j \leq n\}$, where
$\{U_1, U_2, \ldots ,U_n\}\subseteq\left\langle\Aaa\right\rangle^{m}$. Thus $q[W]=\bigcup\{ q[U_j]: 1\leq j \leq n\}$ or $q[W]=\bigcap \{q[U_j]: 1\leq j \leq n\}$, as   $q^{-1}(q[U_i])=U_i$. Because
each $q[U_i]$ is open in $q[X]$, we have $q[W]$ open in $q[X]$ and $q^{-1}(q[W])=W$.

We can complete the proof of Theorem \ref{t5}. Since $h[q_\Pee[U]]=q[U]$ for any $U\in\Pee$ and $\{q_\Pee[U]:U\in\Pee\}$ is a base for $X/\Pee$,
Claim 2 implies that $h$ is open. Finally, since each $X/\Pee_\alpha$ is a Tychonoff space and $q_\Pee$ is homeomorphic to $q$, $X/\Pee$ is also Tychonoff.
\end{proof}

Note that, the space $X{/\Pee}$ is not always Tychonoff. According to \cite[Theorem 5]{kp8}, the space $X{/\Pee}$ is  Tychonoff if $\Pee$ is closed under finite unions and finite intersections and $\Pee$ has the following property: 

\begin{itemize}
	\item[($\Seq$)] 
For every $W\in\Pee$ there exist sequences  $ \{ U_n: 0\leq n \} \subseteq \Pee \mbox{ and } \{ V_n: 0\leq n \}\subseteq\Pee$ such that $U_k\subseteq X\setminus V_k\subseteq U_{k+1},$  for each $ k$, and $ \bigcup \{ U_n: 0\leq n \}=W.$\end{itemize}

For a  space $X$, let $\COZ(X)$ denote the family of all co-zero subsets of $X$. Recall that X is Tychonoff, so the family $\COZ(X)$ is a base.
 Families of the form $\left\langle\Aaa\right\rangle^n$, which we  use in the proof of the following proposition, have been defined
 after Lemma \ref{l4}.

\begin{pro}\label{pc6}
If  $\{\Aaa_\alpha:\alpha\in\mathbb I\}$ is  a collection of families of co-zero sets such that each family $\Aaa_\alpha$ has the property $\Seq$,  then the family
$\left\langle\bigcup\{ \Aaa_\alpha: \alpha\in \mathbb I\} \right\rangle$ has the property $\Seq$.
\end{pro}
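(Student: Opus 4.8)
The plan is to prove, by induction on $m$, that every $W\in\left\langle\Aaa\right\rangle^m$ admits witnessing sequences for property $(\Seq)$ that lie entirely in $\left\langle\Aaa\right\rangle$, where I write $\Aaa=\bigcup\{\Aaa_\alpha:\alpha\in\mathbb{I}\}$. Since $\left\langle\Aaa\right\rangle=\bigcup\{\left\langle\Aaa\right\rangle^m:0\leq m\}$, establishing this for each level $m$ immediately yields $(\Seq)$ for the whole family. For the base case $m=0$, a set $W\in\left\langle\Aaa\right\rangle^0=\Aaa$ belongs to some $\Aaa_\alpha$, and the assumed property $(\Seq)$ of $\Aaa_\alpha$ supplies sequences $\{U_n\},\{V_n\}\subseteq\Aaa_\alpha\subseteq\left\langle\Aaa\right\rangle$ of the required form, so there is nothing to do.

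For the inductive step I would assume the claim for $\left\langle\Aaa\right\rangle^m$ and take $W\in\left\langle\Aaa\right\rangle^{m+1}\setminus\left\langle\Aaa\right\rangle^m$. Exactly as in Claim $1$ of Theorem \ref{t5}, $W$ is of the form $\bigcup_{j=1}^n A_j$ or $\bigcap_{j=1}^n A_j$ with each $A_j\in\left\langle\Aaa\right\rangle^m$, so the inductive hypothesis gives sequences $\{U_k^{(j)}\},\{V_k^{(j)}\}\subseteq\left\langle\Aaa\right\rangle$ witnessing $(\Seq)$ for $A_j$. The heart of the argument is to combine these termwise via De Morgan's laws. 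In the union case I set $U_k=\bigcup_{j=1}^n U_k^{(j)}$ and $V_k=\bigcap_{j=1}^n V_k^{(j)}$; then $U_k\subseteq\bigcup_j (X\setminus V_k^{(j)})=X\setminus V_k$, while $X\setminus V_k=\bigcup_j (X\setminus V_k^{(j)})\subseteq\bigcup_j U_{k+1}^{(j)}=U_{k+1}$, and clearly $\bigcup_k U_k=\bigcup_j A_j=W$. In the intersection case I set $U_k=\bigcap_{j=1}^n U_k^{(j)}$ and $V_k=\bigcup_{j=1}^n V_k^{(j)}$; the dual computation gives $U_k\subseteq\bigcap_j (X\setminus V_k^{(j)})=X\setminus V_k$ and $X\setminus V_k=\bigcap_j (X\setminus V_k^{(j)})\subseteq\bigcap_j U_{k+1}^{(j)}=U_{k+1}$. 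In both cases every constructed set lies in $\left\langle\Aaa\right\rangle$, since that family is closed under finite unions and finite intersections, which is precisely what keeps the witnesses inside the target family.

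The single point that is not purely formal, and the place I would slow down, is verifying $\bigcup_k U_k=W$ in the intersection case, i.e. that $\bigcup_k\bigcap_{j=1}^n U_k^{(j)}=\bigcap_{j=1}^n\bigcup_k U_k^{(j)}$. Here I would use that each witnessing sequence is increasing: the chain $U_k^{(j)}\subseteq X\setminus V_k^{(j)}\subseteq U_{k+1}^{(j)}$ forces $U_k^{(j)}\subseteq U_{k+1}^{(j)}$, and for finitely many increasing sequences the intersection of the unions equals the union of the termwise intersections. With this observation the induction closes, and property $(\Seq)$ holds for $\left\langle\bigcup\{\Aaa_\alpha:\alpha\in\mathbb{I}\}\right\rangle$. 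I do not expect any genuine obstacle beyond this swap of $\bigcup$ and $\bigcap$; the rest is bookkeeping with complements.
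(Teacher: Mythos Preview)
Your proof is correct and follows essentially the same approach as the paper: induction on the level $m$ of $\left\langle\Aaa\right\rangle^m$, combining the witnessing sequences termwise via unions (with intersections of the $V$'s) and intersections (with unions of the $V$'s) according to De Morgan's laws. You are in fact more explicit than the paper about the one nontrivial point, namely that monotonicity $U_k^{(j)}\subseteq U_{k+1}^{(j)}$ is needed to conclude $\bigcup_k\bigcap_j U_k^{(j)}=\bigcap_j\bigcup_k U_k^{(j)}$ in the intersection case; the paper simply asserts this identity.
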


\begin{proof}
We shall show that if the family $\left\langle\bigcup\{ \Aaa_\alpha: \alpha\in \mathbb I\} \right\rangle^{n}$ has the property $\Seq$, then the family
$\left\langle\bigcup\{ \Aaa_\alpha: \alpha\in \mathbb I\} \right\rangle^{n+1}$ has this property, too. For this purpose, the following rules are sufficient. Fix sets $A_1, A_2, \ldots , A_n$ and sequences  $\{ U_k^i: 1\leq k\} \mbox{  and  } \{ V_k^i: 1 \leq k\}$  such that
$$U_k^i\subseteq X\setminus V_k^i\subseteq U_{k+1}^i \mbox{ and } \bigcup\{U_k^i: 1 \leq k\}=A_i,$$ where $1\leq i \leq n$.
By aggregating sequentially the above inclusions, we get $$\bigcup\{U_k^i: 0\leq i \leq n\} \subseteq X\setminus \bigcap\{V_k^i: 1\leq i \leq n\}  \subseteq \bigcup\{U_{k+1}^i: 0\leq i \leq n\}, $$  which also gives
$$\bigcup\{A_i: 0\leq i \leq n \}=\bigcup\{ \bigcup \{ U_k^i: 0\leq i \leq n\}: 1 \leq k \}.$$
Similarly, we get that
$$\bigcap\{U_k^i: 0\leq i \leq n\} \subseteq X\setminus \bigcup\{V_k^i: 1\leq i \leq n\}  \subseteq \bigcap\{U_{k+1}^i: 0\leq i \leq n\}$$  and also
$\bigcap\{A_i: 0\leq i \leq n \}=\bigcup\{ \bigcap \{ U_k^i: 0\leq i \leq n\}: 1 \leq k \}.$ Therefore, we proved that the family $\left\langle\bigcup\{ \Aaa_\alpha: \alpha\in \mathbb I\} \right\rangle^{n+1}$ has the property $\Seq$ provided
$\left\langle\bigcup\{ \Aaa_\alpha: \alpha\in \mathbb I\} \right\rangle^{n}$ has this property. Since,
$\bigcup\{ \Aaa_\alpha: \alpha\in \mathbb I\}$ has the same property, it follows by induction that any $\left\langle\bigcup\{ \Aaa_\alpha: \alpha\in \mathbb I\} \right\rangle^{n}$ has the property $\Seq$. Finally, because $\left\langle\bigcup\{ \Aaa_\alpha: \alpha\in \mathbb I\} \right\rangle$ is
the union of all $\left\langle\bigcup\{ \Aaa_\alpha: \alpha\in \mathbb I\} \right\rangle^{n}$, we conclude that
$\left\langle\bigcup\{\Aaa_\alpha:\alpha\in \mathbb I\} \right\rangle$ has the property $\Seq$.
 \end{proof}
 Every family $\Aaa\subseteq\COZ(X)$ can be extended to a family $\left\langle \Aaa \right\rangle_{\Seq} \subseteq\COZ(X)$ with the following properties:
\begin{itemize}
\item $\left\langle \Aaa \right\rangle_{\Seq} $ has the property $\Seq$;
\item $\left\langle \Aaa \right\rangle_{\Seq} $ is closed under finite unions and finite intersections;
\item $|\left\langle \Aaa \right\rangle_{\Seq} |=|\Aaa|$ provided $\Aaa$ is infinite.
\end{itemize}

 Indeed, for every $W\in\COZ(X)$ fix a continuous function $f_W:X\to [0,1]$ with $W=f_W^{-1}((0,1])$. Let $$U_n^W=f_W^{-1}((1/(n+1),1]) \mbox{ and }  V_n^W=X\setminus f_W^{-1}([1/(n+1),1]),$$ where $n\geq 1$. Let  $\left\langle \Aaa\right\rangle_{\Seq}^0=\Aaa$ and then define by induction the families
$\left\langle\Aaa\right\rangle_{\Seq}^{m+1}$ each one to be the union of $ \left\langle \Aaa\right\rangle_{\Seq}^m$ and all families $\left\langle\Vee\right\rangle$, where $ \Vee \subset \left\langle \Aaa\right\rangle_{\Seq}^m$  is a finite family, and also  $ \{U^W_n: W\in\left\langle \Aaa\right\rangle_{\Seq}^m\}$ and $ \{V^W_n: W\in\left\langle \Aaa\right\rangle_{\Seq}^m\}$, for each $ n\geq 1$. Then the family
$\left\langle \Aaa\right\rangle_{\Seq}=\bigcup\{ \left\langle \Aaa\right\rangle_{\Seq}^m: m\geq 0\}$ is the required extension of $\Aaa$.
\begin{cor}\label{p6}
If  $\{\Aaa_\alpha:\alpha\in\mathbb I\}$ is  a collection of families of co-zero sets,  then the family
$\left\langle\bigcup\{\left\langle\Aaa_\alpha\right\rangle_{\Seq}: \alpha\in \mathbb I\} \right\rangle$ has the property $\Seq$.
\end{cor}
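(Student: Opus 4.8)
The plan is to deduce this immediately from Proposition \ref{pc6}, applied not to the original families but to their $\Seq$-extensions. First I would set $\Bee_\alpha=\left\langle\Aaa_\alpha\right\rangle_{\Seq}$ for each $\alpha\in\mathbb I$. By the construction of the operator $\left\langle\cdot\right\rangle_{\Seq}$ recorded in the three bullet points preceding the corollary, every $\Bee_\alpha$ is a family of co-zero subsets of $X$ that possesses the property $\Seq$ (it is moreover closed under finite unions and finite intersections, although only the $\Seq$ property will be used here).

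Next I would observe that the collection $\{\Bee_\alpha:\alpha\in\mathbb I\}$ now meets exactly the hypothesis of Proposition \ref{pc6}: it is a collection of families of co-zero sets, each member of which has the property $\Seq$. Invoking that proposition therefore yields that $\left\langle\bigcup\{\Bee_\alpha:\alpha\in\mathbb I\}\right\rangle$ has the property $\Seq$. Finally I would unfold the abbreviation $\Bee_\alpha=\left\langle\Aaa_\alpha\right\rangle_{\Seq}$, giving $\left\langle\bigcup\{\Bee_\alpha:\alpha\in\mathbb I\}\right\rangle=\left\langle\bigcup\{\left\langle\Aaa_\alpha\right\rangle_{\Seq}:\alpha\in\mathbb I\}\right\rangle$, which is precisely the family appearing in the statement.

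Since each step is either a definitional substitution or a direct application of Proposition \ref{pc6}, I do not expect any genuine difficulty; the corollary is really just Proposition \ref{pc6} stripped of its standing assumption that the $\Aaa_\alpha$ already have property $\Seq$. The only point warranting a moment's attention — and it is already supplied by the inductive construction given just above the corollary — is that $\left\langle\cdot\right\rangle_{\Seq}$ genuinely produces a family with the property $\Seq$ while keeping all sets co-zero. Everything else is bookkeeping.
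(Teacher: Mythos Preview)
Your proposal is correct and matches the paper's intent: the corollary is stated without proof, precisely because it follows immediately from Proposition~\ref{pc6} applied to the families $\left\langle\Aaa_\alpha\right\rangle_{\Seq}$, each of which has property $\Seq$ by the construction just given. There is nothing to add.
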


\section{On  multiplicative lattices}
Let $g$ and $\phi$ be two maps defined on a space $X$.   If  there exists a map $h: \phi[X]\to g[X]$  such that $g= h\circ\phi$, then we will briefly write  $\phi\prec g.$
Under this condition, we have $ g^{-1}(U)=\phi^{-1}(h^{-1}(U)) $, for any $U\subseteq g[X]$. Specially, if $U\subseteq g[X]$ is  open, then the  $\phi[g^{-1}(U)] = h^{-1}(U)$ is also open in $\phi[X]$.
Recall, that  a family $\Psi$ of maps with a common domain $X$ is a \textit{multiplicative lattice } (see \cite[Definition 9]{s81}, \cite{va2}, or \cite[p. 1951]{kpv}) whenever:
\begin{itemize}
\item[(L1)] For any map $f\colon X\to f[X]$ there exists $\phi\in\Psi$ with $\phi\prec f$ and $w(\phi[X])\leq w(f[X])$;
\item[(L2)]  If $\{\phi_\alpha:\alpha\in \Jee\}\subset\Psi$, then the diagonal 
$\triangle\{\phi_\alpha:\alpha\in \Jee\}$ is homeomorphic to some element of $\Psi$.
\end{itemize}
If $\Psi$ consists of  maps onto second countable spaces, then the following conditions  are fulfilled:
\begin{itemize}
\item[($\omega$-L1)] For any map $f\colon X\to f[X]$, such that $f[X]$ is second countable,  there exists $\phi\in\Psi$ with $\phi\prec f$;
\item[($\omega$-L2)]  If $\{\phi_n: 0\leq n\}\subset\Psi$, then the diagonal 
$\triangle\{\phi_n:0\leq n\}$ is homeomorphic to some element of $\Psi$,
\end{itemize}
In such a case the family $\Psi$ is called an $\omega$-\textit{multiplicative lattice}.
When a multiplicative ($\omega$-multiplicative) lattice $\Psi$) consists of skeletal maps, it is called a \textit{multiplicative lattice of skeletal maps} (resp. $\omega$-\textit{multiplicative lattice of skeletal maps}).

Denote by $[\COZ(X)]^{ \omega}$ the collection of all countable and infinite families of co-zero sets.  According to  \cite[p. 208]{dkz},  a family $\Cee\subseteq\mathcal [\COZ(X)]^{ \omega}$ is said to be
a \textit{club} if:
\begin{itemize}
	\item[(i)] $\Cee$ is closed under increasing
$\omega$-chains, i.e., if $\Aaa_1\subseteq \Aaa_2\subseteq \ldots$ is an
increasing $\omega$-chain from $\Cee$, then $\bigcup\{ \Aaa_n: 0 < n \} \in\Cee$; \item[(ii)] for any $\Bee\in [\COZ(X)]^{\leq\omega}$ there exists
$\Aaa\in\Cee$ with $\Bee\subseteq \Aaa$,  \end{itemize}
 A club $\Cee$ is said to be a {\em $c$-club} if it satisfies the following condition:
\begin{itemize}
\item[(iii)] If  $\Pee \in \Cee$,  then   $\Pee\subset_c \COZ(X)$, $\Pee$ has the property $\Seq$ and $\Pee$ is closed under finite union and finite intersections.
\end{itemize}
The relationship  $\Pee\subset_c\COZ(X)$  means that $\Pee\subseteq \COZ(X)$ and \begin{itemize}
	\item[(c)]
For any nonempty $V\in\COZ(X)$ there
exists $W\in \Pee$ such that if  $U\in \Pee$ and $U\subseteq W$, then $U\cap
V\neq\varnothing$. \end{itemize}
Let us note that, (c) may be replaced by the following, see \cite[p. 208]{dkz}:
 \begin{itemize}
	\item[(c$^*$)]
For any $\Wee \subseteq \Pee$, the family $\Wee$ is predense in $\Pee$ if and only if $\Wee$ is predense in $\COZ(X)$.  \end{itemize}

Let us explain that a family $\Wee\subseteq \Pee $ is \textit{predense in} $\Pee\subseteq \COZ(X)$, whenever for each $P\in \Pee$ there exist $V\in \Wee$ and $Q\in \Pee$ such that $Q\subseteq V\cap P$. So,  one can  check the equivalence between  conditions (c) and (c$^*$), using that $\Pee$ is closed under finite intersections. Nevertheless, we have the  following explanations.

\begin{lem}\label{rem*} Suppose $\Pee\subset_c \COZ(X)$ is  closed under finite intersections and $\Wee\subseteq\Pee$. If the family $\Wee \subseteq \Pee$ is predense in $\Pee$, then $\Wee$ is predense in $\COZ(X)$.
 \end{lem}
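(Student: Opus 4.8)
The plan is to unwind the two notions of predensity and reduce the claim to a single application of condition (c) together with the hypothesis that $\Wee$ is predense in $\Pee$. Recall that, since $X$ is Tychonoff, $\COZ(X)$ is a base; hence to establish that $\Wee$ is predense in $\COZ(X)$ it suffices to prove that for every nonempty $P\in\COZ(X)$ there is some $V\in\Wee$ with $V\cap P\neq\varnothing$. Indeed, once this is shown, the witness required by the definition of predensity in $\COZ(X)$, namely a set $Q\in\COZ(X)$ with $Q\subseteq V\cap P$, can simply be taken to be $V\cap P$ itself, which is again a nonempty co-zero set.

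First I would fix an arbitrary nonempty $P\in\COZ(X)$ and feed it into condition (c) coming from $\Pee\subset_c\COZ(X)$. This produces a set $W\in\Pee$ with the property that every $U\in\Pee$ satisfying $U\subseteq W$ meets $P$. The purpose of this step is to transfer the ``external'' set $P\in\COZ(X)$ into a single ``test element'' $W$ lying inside $\Pee$, against which the predensity hypothesis can be applied.

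Next I would apply predensity of $\Wee$ in $\Pee$ to the element $W\in\Pee$: by definition this yields $V\in\Wee$ and $Q\in\Pee$ with $Q\subseteq V\cap W$. In particular $Q\in\Pee$ and $Q\subseteq W$, so condition (c), applied with the very $W$ chosen above, gives $Q\cap P\neq\varnothing$. Since $Q\subseteq V$, we conclude $V\cap P\supseteq Q\cap P\neq\varnothing$, and the reduction recorded in the first paragraph completes the argument.

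The only delicate point is the bookkeeping of quantifiers: one must use exactly the $W$ delivered by (c) as the input to the predensity condition, and then observe that the returned witness $Q$ automatically lies below $W$, so that (c) may be reapplied to it. I would also remark that closure of $\Pee$ under finite intersections is not in fact used in this direction; it is the reverse implication, which places $V\cap P$ back inside $\Pee$, that needs it, and this is precisely the reason the full equivalence of (c) and (c$^*$) invokes that closure.
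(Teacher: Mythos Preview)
Your proof is correct and follows essentially the same route as the paper's: apply condition (c) to a given co-zero set to obtain a test element $W\in\Pee$, apply predensity of $\Wee$ in $\Pee$ to $W$, and then invoke the defining property of $W$ once more. The paper argues by contradiction and, at the second step, uses closure of $\Pee$ under finite intersections to place $V\cap B\in\Pee$ before reapplying (c); your direct version instead uses the witness $Q\in\Pee$ already supplied by the definition of predensity, which makes the closure hypothesis superfluous in this direction --- an observation you correctly record.
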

\begin{proof}
Suppose $\Wee$  is not predense in $\COZ(X)$. So, we can fix $A\in \COZ(X)$ such that $A\cap V=\varnothing$ for each $V\in \Wee$. Using  the condition (c), fix $B\in \Pee$ such that if  $B^*\in \Pee$ and $B^*\subseteq B$, then $A\cap B^*\neq\varnothing$. Since $\Wee$ is predense in $\Pee$,  we choose $V\in \Wee$  such that $V\cap B\ne \varnothing$ and $V\cap B  \in \Pee$. Again by (c), we get $A \cap V\cap B\ne \varnothing$, which contradicts  $A \cap V= \varnothing$.\end{proof}

Suppose $ \Pee\subseteq \COZ(X)$  is  closed under finite intersections. If  $\Wee \subseteq \Pee$ and $\Wee$ is predense in $\COZ(X)$, then it has to be predense in $\Pee,$ too. Also, if $\Wee \subseteq \Pee$ and  $\bigcup \Wee$ is dense in $X$, then  $\Wee$ is predense in $\Pee$. In particular, if $\Wee$ is predense in $\COZ(X)$, then $\bigcup \Wee $ has to be dense in $X$.

\begin{lem}\label{rem1}
Suppose $ \Pee\subseteq \COZ(X)$  is  closed under finite intersections. The following conditions are equivalent:
\begin{enumerate}
	\item Any family $\Wee\subseteq \Pee $,  which is predense in $\Pee$, has to be  predense in $\COZ(X)$;
		\item $\Pee \subset_c \COZ(X)$.
\end{enumerate}
 \end{lem}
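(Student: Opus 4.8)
The plan is to establish the two implications separately. The implication $(2)\Rightarrow(1)$ is immediate: it is precisely the content of Lemma~\ref{rem*}, so I would simply invoke it, observing that the hypothesis required there (that $\Pee$ is closed under finite intersections) is exactly the standing assumption of the present lemma. Hence no new work is needed in this direction.

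The substance lies in $(1)\Rightarrow(2)$, which I would prove by contraposition. Assuming $\Pee\not\subset_c\COZ(X)$, I negate condition (c): there is a nonempty $V\in\COZ(X)$ such that for every $W\in\Pee$ one can find $U\in\Pee$ with $U\subseteq W$ and $U\cap V=\varnothing$. The natural candidate for a family violating (1) is then
$$\Wee=\{U\in\Pee: U\cap V=\varnothing\},$$
and the goal is to show that $\Wee$ is predense in $\Pee$ but not predense in $\COZ(X)$.

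To verify predensity in $\Pee$, I take an arbitrary $P\in\Pee$ and apply the negation of (c) with $W=P$; this yields $U\in\Pee$ with $U\subseteq P$ and $U\cap V=\varnothing$, so $U\in\Wee$, and then $Q=U\in\Pee$ satisfies $Q\subseteq U\cap P$, which is exactly what the definition of predensity in $\Pee$ demands. To see that $\Wee$ is not predense in $\COZ(X)$, I observe that every member of $\Wee$ is disjoint from the nonempty co-zero set $V$, whence $V\cap\bigcup\Wee=\varnothing$ and $\bigcup\Wee$ is not dense in $X$; by the remark preceding the lemma (predensity in $\COZ(X)$ forces $\bigcup\Wee$ to be dense), $\Wee$ cannot be predense in $\COZ(X)$. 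This contradicts (1) and completes the contrapositive.

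The only delicate point, and hence the main obstacle, is the bookkeeping in negating condition (c) correctly and confirming that the witnesses produced are genuine nonempty members of $\Pee$, so that $\Wee$ is a legitimate nonempty predense family. Once the quantifiers are unwound, the definition of $\Wee$ is essentially forced, and every remaining verification follows routinely from the definitions and from the remarks already recorded above.
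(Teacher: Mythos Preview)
Your proposal is correct and follows essentially the same route as the paper: invoke Lemma~\ref{rem*} for $(2)\Rightarrow(1)$, and for $(1)\Rightarrow(2)$ argue by contraposition, negate condition~(c) to obtain a nonempty $V\in\COZ(X)$, and take $\Wee=\{U\in\Pee:U\cap V=\varnothing\}$ as the counterexample family. The paper's proof is terser but identical in substance; your additional verifications of predensity in $\Pee$ and failure of predensity in $\COZ(X)$ simply spell out what the paper leaves to the reader.
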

\begin{proof} In view of Lemma \ref{rem*}, we need to show  the implication $(1)\Rightarrow(2)$. Suppose that (2) is not true. So, we can fix $V\in \COZ(X)$ such that for each $U\in \Pee$ there exists a co-zero set $A \subseteq U$, which satisfies  $A \in \Pee$ and $A\cap V= \varnothing$.  In such a case, the family $$ \{
B\in \Pee: B\cap V =\varnothing\},$$ would be predense in $\Pee$ and not predense in $\COZ(X).$
\end{proof}

If $\Cee$ is a $c$-club and $\left\langle \Aaa_1 \cup \Aaa_2 \right\rangle\in\Cee$ for all   $\Aaa_1, \Aaa_2 \in \Cee$, then $\Cee$ is called an \textit{additive} $c$-club. Every additive $c$-club $\Cee$ has the following property:  $\left\langle \bigcup \{\Aaa_n: 0\leq n\} \right\rangle \in \Cee$ for any   $\{\Aaa_n: 0 \leq  n\} \subset \Cee$. To see this, first check inductively that $\left \langle \bigcup \{A_k: k \leq n \}\right\rangle \in \Cee$, and then use (i).

Now, we show that $\omega$-multiplicative lattices of skeletal maps  implies the existence of additive $c$-clubs.
\begin{thm}\label{888}
If a Tychonoff space   has  an $\omega$-multiplicative lattice of skeletal maps, then there exists  an additive  c-club $\Cee$. If, additionally, $X$  has  a multiplicative lattice of skeletal maps, then $\Cee$ satisfies the following condition: $ \left\langle \bigcup \Raa  \right\rangle $ has the property $\Seq$ and $ \left\langle \bigcup \Raa  \right\rangle \subset_c \COZ(X)$ for any family $\Raa  \subseteq \Cee$.
\end{thm}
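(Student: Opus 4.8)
I would build $\Cee$ directly from the lattice $\Psi$, using the quotient maps $q_\Pee$ of Lemma~\ref{l4} as the bridge between countable covers of co-zero sets and skeletal maps. The engine of the whole argument will be one observation: if $\psi\colon X\to Y$ is a surjective skeletal map and $\Bee\subseteq\COZ(Y)$ is a base for $Y$, then $\{\psi^{-1}(B):B\in\Bee\}\subset_c\COZ(X)$. To see this, I would take a nonempty $V\in\COZ(X)$, use skeletality to find a nonempty $B_0\in\Bee$ with $B_0\subseteq\int\overline{\psi[V]}$, and put $W=\psi^{-1}(B_0)$; surjectivity then forces every nonempty $\psi^{-1}(B)\subseteq W$ to satisfy $B\subseteq B_0\subseteq\overline{\psi[V]}$, so the nonempty open $B$ meets $\psi[V]$ and hence $\psi^{-1}(B)\cap V\neq\varnothing$, which is exactly condition (c). With this in hand I would define $\Cee$ to be the collection of all $\Pee\in[\COZ(X)]^\omega$ that cover $X$, are closed under finite unions and intersections, have property $\Seq$, and for which $q_\Pee$ is homeomorphic to some element of $\Psi$. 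For such $\Pee$ the space $X/\Pee$ is Tychonoff by \cite[Theorem~5]{kp8}, the sets $q_\Pee[P]$ form a base, and $\Pee=\{q_\Pee^{-1}(q_\Pee[P]):P\in\Pee\}$; since $q_\Pee$ is homeomorphic to a skeletal map it is itself skeletal, so the observation hands me $\Pee\subset_c\COZ(X)$ for free. Thus (iii) holds for every member of $\Cee$, and only (i), (ii) and additivity remain.

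For (ii) I would start from a countable $\Bee=\{B_k:0\le k\}\subseteq\COZ(X)$, write $B_k=f_k^{-1}((0,1])$, and form $f=\triangle\{f_k:0\le k\}\colon X\to[0,1]^\omega$, whose image $f[X]$ is second countable. By ($\omega$-L1) there is $\phi\in\Psi$ with $\phi\prec f$, say $f=h\circ\phi$, so each $B_k=\phi^{-1}(h^{-1}(W_k))$ is the $\phi$-preimage of a co-zero subset of $\phi[X]$. I would then adjoin the sets $h^{-1}(W_k)$ to a countable base of $\phi[X]$ and apply the $\langle\cdot\rangle_{\Seq}$-construction inside $\phi[X]$ to get a countable base $\Bee_\phi$ of $\phi[X]$ closed under finite unions and intersections and with property $\Seq$, and set $\Pee=\{\phi^{-1}(B):B\in\Bee_\phi\}$. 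Pulling back along the continuous surjection $\phi$ preserves the Boolean operations and $\Seq$, $\Pee$ covers $X$, $q_\Pee$ is homeomorphic to $\phi\in\Psi$, and $\Bee\subseteq\Pee$; so $\Pee\in\Cee$ witnesses (ii).

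The hard part will be (i): $\subset_c$ is not preserved under unions, so it cannot be checked term by term along an increasing chain $\Pee_0\subseteq\Pee_1\subseteq\cdots$ from $\Cee$, and this is precisely where the lattice must be invoked. I would set $\Pee=\bigcup_n\Pee_n$, note that as an increasing union of $\cup,\cap$-closed families it equals $\langle\bigcup_n\Pee_n\rangle$, choose $\phi_n\in\Psi$ with $q_{\Pee_n}$ homeomorphic to $\phi_n$, and invoke Theorem~\ref{t5} to identify $q_\Pee$ with the diagonal $\triangle\{q_{\Pee_n}:0\le n\}$, hence with $\triangle\{\phi_n:0\le n\}$. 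By ($\omega$-L2) this diagonal is homeomorphic to some $\psi\in\Psi$, so it is skeletal; then $q_\Pee$ is a skeletal surjection and the observation again gives $\Pee\subset_c\COZ(X)$, while $\Seq$ and $\cup,\cap$-closure are inherited from the $\Pee_n$. The same mechanism yields additivity: for $\Aaa_1,\Aaa_2\in\Cee$, the family $\langle\Aaa_1\cup\Aaa_2\rangle$ has $\Seq$ by Proposition~\ref{pc6} and is $\cup,\cap$-closed, its quotient is identified by Theorem~\ref{t5} with $q_{\Aaa_1}\triangle q_{\Aaa_2}$, and ($\omega$-L2), applied to the finite diagonal read as a countable one with repetitions, makes this homeomorphic to an element of $\Psi$; hence $\langle\Aaa_1\cup\Aaa_2\rangle\in\Cee$.

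Finally, for the additional clause I would assume $X$ carries a full multiplicative lattice of skeletal maps and take an arbitrary $\Raa\subseteq\Cee$. Proposition~\ref{pc6} again yields that $\langle\bigcup\Raa\rangle$ has property $\Seq$. Picking $\phi_\Aaa\in\Psi$ with $q_\Aaa$ homeomorphic to $\phi_\Aaa$ for each $\Aaa\in\Raa$, Theorem~\ref{t5} identifies $q_{\langle\bigcup\Raa\rangle}$ with the diagonal $\triangle\{q_\Aaa:\Aaa\in\Raa\}$, and now the full property (L2) --- rather than its $\omega$-version --- lets this possibly uncountable diagonal be homeomorphic to a skeletal $\psi\in\Psi$. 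One last application of the observation to the skeletal surjection $q_{\langle\bigcup\Raa\rangle}$ gives $\langle\bigcup\Raa\rangle\subset_c\COZ(X)$. The point to watch throughout this clause is exactly that $\Raa$ may be uncountable, which is why (L2) and not merely ($\omega$-L2) is needed.
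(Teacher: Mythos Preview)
Your proof is correct and follows essentially the same strategy as the paper's: build $\Cee$ from the lattice, use that skeletal surjections yield $\subset_c$ for preimage-of-base families, and invoke the multiplicativity of $\Psi$ (via diagonals) to handle chains, finite joins, and arbitrary $\Raa$. The organizational differences are minor. First, the paper defines $\Cee$ concretely as $\{\phi^{-1}(\Bee_\phi):\phi\in\Psi_\omega\}$ for one fixed base $\Bee_\phi$ per map, whereas you take the (a priori larger) collection of all countable $\Seq$-covers whose quotient map is homeomorphic to a lattice element; both choices work. Second, where you appeal to Theorem~\ref{t5} to identify $q_{\langle\bigcup_n\Pee_n\rangle}$ with the diagonal $\triangle_n q_{\Pee_n}$ and then invoke ($\omega$-L2) or (L2), the paper works directly with the diagonal $\phi=\triangle_n\phi_n$ and the base $\langle\bigcup_n\pi_n^{-1}(\Bee_{\phi_n})\rangle$ inside $\phi[X]$. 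Third, your ``observation'' (skeletal surjection plus base implies $\subset_c$) is exactly what the paper cites as \cite[Proposition~7]{kp8}; you reprove it, which is fine. One small point worth making explicit in your last paragraph: for the additional clause the paper takes $\Psi_\omega$ to be the second-countable part of the full lattice $\Psi$, so that the $\phi_\Aaa$ witnessing membership in $\Cee$ already lie in $\Psi$ and (L2) applies directly; your write-up should say this to avoid any ambiguity about which $\Psi$ the $\phi_\Aaa$ belong to.
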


\begin{proof}
Suppose  $\Psi_\omega$ is an $\omega$-multiplicative lattice of skeletal maps for the space $X$.
For every $\phi\in\Psi_\omega$ fix an infinite countable  base $\Bee_\phi$, for the space $\phi[X]$ such that $\Bee_\phi$  has the property $\Seq$ and it is closed under finite unions and finite intersections. Let $$\Cee=\{\{\phi^{-1}(V): V \in \Bee_\phi\}:\phi\in\Psi_\omega \}.$$ So, each $\Pee \in \Cee$ is closed under finite union and finite intersections, and it also  has the property $Seq$. Moreover,  we can see that
   $$ (*)\hspace{1cm} \mbox{ If  $\{\Aaa_n: 0 \leq  n\}
\subset\Cee,$ then  $\Aaa=\left\langle \bigcup \{\Aaa_n: 0\leq n\}\right\rangle\in\Cee$.\hspace{1cm}}$$
Indeed,  take $\phi_n \in \Psi_\omega$ such that $\Aaa_n= \{\phi_n^{-1}(V): V \in \Bee_{\phi_n}\}$ for each $n$. Because $\Psi_\omega$ is an $\omega$-multiplicative lattice, condition ($\omega$-L2) implies that
 the diagonal map
 $\phi=\triangle\{\phi_n:0\leq n\}$ belongs to $\Psi_\omega$. Put  $$\Dee_n=\{\pi_n^{-1}( V): V\in \Bee_{\phi_n}\},$$
 where each $\pi_n\colon\phi[X]\to\phi_n[X]$ is the natural projection.  Thus,
 $$\{\phi^{-1}( V): V\in \bigcup \{\Dee_n: 0\leq n \}\}=\bigcup \{\Aaa_n: 0\leq n \}.$$ Let $$\left\{\phi^{-1}( V): V\in \left\langle \bigcup \{\Dee_n: 0\leq n \}\right\rangle\right\} = \Aaa.$$
But  the family $\left\langle\bigcup \{\Dee_n: 0\leq n \}\right\rangle$ is a base for $\phi[X]$ which is closed under finite unions and finite intersections.
Since each $\Dee_n$ has the property $\Seq$, then   the family $\left\langle\bigcup \{\Dee_n: 0\leq n \}\right\rangle$, by Proposition \ref{pc6}, has also the property $\Seq$. Therefore $\Aaa\in\Cee$.

 Condition $(*)$ implies that $\Cee$ is closed under increasing $\omega$-chains. To prove that $\Cee$ satisfies (ii),  suppose $\Aaa\subseteq \COZ(X)$ is a countable family. For every $U\in\Aaa$, there exists a function $g_U:X\to [0,1]$ such that $g_U^{-1}((0,1])=U$ (recall that $U$ is a co-zero set). By  condition ($\omega$-L1), there exists $\phi_U\in\Psi_\omega$ with $\phi_U\prec\ g_U$. The last relation yields that
  $\phi_U[U]$ is open in $\phi_U[X]$ and $\phi_U^{-1}(\phi_U[U])=U$. Since $$\phi_\Aaa=\triangle\{\phi_U:U\in\Aaa\}\in\Psi_\omega,$$ we get
 $\phi_\Aaa^{-1}(\phi_\Aaa[U])=U$ and $\phi_\Aaa[U]$ is open in $\phi_\Aaa[X]$, for all $U\in\Aaa$. Indeed, since $ \pi_U \circ \phi_\Aaa = \phi_U $
 ($\pi_U\colon\phi_{\Aaa}[X]\to\phi_U[X]$ is the projection) we have $$ U= \phi^{-1}_U(\phi_U[U])= \phi^{-1}_\Aaa (\pi^{-1}_U(\phi_U[U])). $$
Thus $\phi_\Aaa [U]= \pi^{-1}_U(\phi_U[U])$ is an open set, and also $U=\phi_\Aaa^{-1}(\phi_\Aaa[U]).$
Let $\Bee_\Aaa \supseteq \{\phi_\Aaa[U]:U\in\Aaa\}$ be a countable base  of the space $\phi_\Aaa[X]$ such that $\left\langle\Bee_\Aaa\right\rangle=\Bee_\Aaa$ and $\Bee_\Aaa $ has the property $\Seq$.   Then   $\{\phi_\Aaa^{-1}(V): V\in \Bee_\Aaa\}\in\Cee$  and contains $\Aaa$, which provides that $\Cee$ satisfies  (ii).

Because each $\phi\in\Psi_\omega$ is a skeletal map, by \cite[Proposition 7]{kp8}, $\Aaa \subset_c \COZ(X)$ for each $\Aaa \in \Cee$. Finally, the additivity
of $\Cee$ follows also from condition $(*)$.

In case $\Psi$ is a multiplicative lattice of skeletal maps for $X$ we define $\Cee$ as above, where $\Psi_\omega \subseteq \Psi$ consists of all possible maps onto second countable spaces.
Consider maps $\phi_\alpha \in \Psi_\omega$ such that $\Aaa_\alpha= \{\phi_\alpha^{-1}(V): V \in \Bee_{\phi_\alpha}\}$ for each $\alpha\in \mathbb{I}$.  The diagonal map
 $\phi=\triangle\{\phi_\alpha : \alpha \in \mathbb{I}\}$ belongs to $\Psi$, so it is a skeletal map.
 Put  $$\Dee_\alpha=\{\pi_\alpha^{-1}( V): V\in \Bee_{\phi_\alpha}\},$$
 where each $\pi_\alpha\colon\phi[X]\to\phi_\alpha[X]$ is the natural projection.  Thus,
 $$\{\phi^{-1}( V): V\in \bigcup \{\Dee_\alpha: \alpha \in \mathbb{I} \}\}=\bigcup \{\Aaa_\alpha: \alpha\in \mathbb{I} \}.$$ Hence,  $$\left\{\phi^{-1}( V): V\in \left\langle \bigcup \{\Dee_\alpha: \alpha \in \mathbb{I} \}\right\rangle\right\} = \left\langle \bigcup \{\Aaa_\alpha: \alpha\in \mathbb{I} \}\right\rangle.$$
The family $\left\langle \bigcup \{\Aaa_\alpha: \alpha\in \mathbb{I} \}\right\rangle$  has the property $\Seq$, due to   Proposition  \ref{pc6}. 
Because the family $\left\langle\bigcup \{\Dee_\alpha: \in \mathbb{I} \}\right\rangle$  is a base for $\phi[X]$, by \cite[Proposition 7]{kp8} we have $\left\langle \bigcup \{\Aaa_\alpha: \alpha\in \mathbb{I} \}\right\rangle \subset_c \COZ(X)$.
\end{proof}

\section{A characterization of skeletally Dugundji spaces}

According to \cite{kpv1} a Tychonoff space $X$ is \textit{skeletally Dugundji if and only if } $X$ has a multiplicative lattice of skeletal maps. Continuing the studies initiated in \cite{kpv1}, we provide a characterization of skeletally Dugundji spaces in terms of $c$-clubs.
First, we establish the next proposition, which is reverse to Theorem \ref{888}.

\begin{pro}\label{t88} Let $X$ be a Tychonoff space $X$.
If there exists  an additive  c-club $\Cee$ such that $ \left\langle \bigcup \Raa  \right\rangle $ has the property $\Seq$ and
$ \left\langle \bigcup \Raa  \right\rangle \subset_c \COZ(X)$ for any family $\Raa  \subseteq \Cee$, then  $X$  has  a multiplicative lattice of skeletal maps.
\end{pro}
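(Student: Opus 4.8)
The plan is to realize the required lattice as the family of quotient maps $q_\Pee$ determined by subfamilies of $\Cee$. First I would normalise $\Cee$: replacing it by $\{\Aaa\in\Cee: X\in\Aaa\}$ if necessary (this subcollection is still an additive $c$-club satisfying the same hypotheses, since $X\in\COZ(X)$ can always be appended to a countable family before applying (ii), and every subfamily of the smaller club is a subfamily of $\Cee$), I may assume that every $\Aaa\in\Cee$ contains $X$, hence covers $X$. Then for any $\Raa\subseteq\Cee$ the family $\Pee=\langle\bigcup\Raa\rangle$ is an open cover of $X$, closed under finite unions and finite intersections, has the property $\Seq$ and satisfies $\Pee\subset_c\COZ(X)$, both by the standing hypothesis. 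Consequently $X/\Pee$ is Tychonoff by \cite[Theorem 5]{kp8}, the map $q_\Pee$ is skeletal by \cite[Proposition 7]{kp8}, and by Lemma \ref{l4} the sets $q_\Pee[V]$ form a base, so $\w(X/\Pee)\le|\Pee|=|\bigcup\Raa|$. I would then set
$$\Psi=\{q_{\langle\bigcup\Raa\rangle}:\Raa\subseteq\Cee\}$$
(closed under homeomorphism) and show that $\Psi$ is a multiplicative lattice of skeletal maps.

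For (L1), given $f\colon X\to f[X]$ with $\w(f[X])=\kappa$, I would fix a base $\{B_\beta:\beta<\kappa\}$ of $f[X]$ consisting of co-zero sets, so each $f^{-1}(B_\beta)\in\COZ(X)$. Using (ii) I would choose, for every $\beta$, a member $\Aaa_\beta\in\Cee$ with $f^{-1}(B_\beta)\in\Aaa_\beta$, put $\Raa=\{\Aaa_\beta:\beta<\kappa\}$ and $\Pee=\langle\bigcup\Raa\rangle$, so that $q_\Pee\in\Psi$ and $\{f^{-1}(B_\beta):\beta<\kappa\}\subseteq\Pee$. Since each $f^{-1}(B_\beta)$ is a union of fibres of $f$ and these sets separate distinct fibres ($f[X]$ is Tychonoff, hence $T_0$, and $\{B_\beta\}$ is a base), $q_\Pee(x)=q_\Pee(y)$ forces $f(x)=f(y)$; thus there is a bijection $h\colon q_\Pee[X]\to f[X]$ with $f=h\circ q_\Pee$, and $h^{-1}(B_\beta)=q_\Pee[f^{-1}(B_\beta)]$ is open because $q_\Pee^{-1}(q_\Pee[U])=U$ for $U\in\Pee$. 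Hence $h$ is continuous and $q_\Pee\prec f$. Finally $\w(q_\Pee[X])\le|\Pee|=|\bigcup\Raa|\le\kappa$, as each $\Aaa_\beta$ is countable, which is exactly the weight requirement in (L1).

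For (L2), let $\{\phi_\alpha:\alpha\in\Jee\}\subseteq\Psi$, say $\phi_\alpha=q_{\Pee_\alpha}$ with $\Pee_\alpha=\langle\bigcup\Raa_\alpha\rangle$ for some $\Raa_\alpha\subseteq\Cee$. Putting $\Raa=\bigcup_{\alpha\in\Jee}\Raa_\alpha\subseteq\Cee$ and $\Pee=\langle\bigcup\Raa\rangle$, idempotency of $\langle\cdot\rangle$ gives $\Pee=\langle\bigcup\{\Pee_\alpha:\alpha\in\Jee\}\rangle$. Each $\Pee_\alpha$ is an open cover closed under finite intersections and unions with $X/\Pee_\alpha$ Tychonoff, so Theorem \ref{t5} applies and yields that $q_\Pee$ is homeomorphic to the diagonal $\triangle\{q_{\Pee_\alpha}:\alpha\in\Jee\}=\triangle\{\phi_\alpha:\alpha\in\Jee\}$. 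As $q_\Pee\in\Psi$, condition (L2) holds, and $q_\Pee$ is skeletal since $\Pee\subset_c\COZ(X)$ by hypothesis.

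I expect the main obstacle to be (L1): one must simultaneously produce the \emph{continuous} factorisation $h$ and keep the weight controlled. The saturation and separation properties of the sets $f^{-1}(B_\beta)$, together with the identity $q_\Pee^{-1}(q_\Pee[U])=U$ for $U\in\Pee$ (which is precisely where the structure of $\Pee$, and implicitly $X=\bigcup\Pee$, enters), are what make $h$ well defined and continuous, while countability of the members of $\Cee$ delivers the weight bound. By contrast, (L2) is essentially a direct application of Theorem \ref{t5}, and skeletality of every member of $\Psi$ is furnished uniformly by the hypothesis that $\langle\bigcup\Raa\rangle\subset_c\COZ(X)$ for all $\Raa\subseteq\Cee$ via \cite[Proposition 7]{kp8}.
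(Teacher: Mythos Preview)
Your argument is correct and follows essentially the same route as the paper: define $\Psi=\{q_{\langle\bigcup\Raa\rangle}:\Raa\subseteq\Cee\}$, obtain skeletality from \cite[Proposition~7]{kp8}, verify (L1) by absorbing a base of $f[X]$ via the club axiom (ii), and deduce (L2) from Theorem~\ref{t5}. One small slip: the map $h$ in your (L1) step is a well-defined continuous \emph{surjection}, not a bijection (the fibres of $q_\Pee$ may be strictly finer than those of $f$), but only $f=h\circ q_\Pee$ with $h$ continuous is needed for $q_\Pee\prec f$; your normalisation ensuring $X\in\Aaa$ for every $\Aaa\in\Cee$ is a nice extra care that the paper leaves implicit.
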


\begin{proof} Suppose  $\Cee$ is an additive $c$-club such that $ \left\langle \bigcup \Raa  \right\rangle $ has the property $\Seq$ and
$ \left\langle \bigcup \Raa  \right\rangle \subset_c \COZ(X)$ for any family $\Raa  \subseteq \Cee$.   Let
$\mathbb{S}$ be the collection of all families of the form $\left\langle\bigcup\{\Pee_\alpha:\alpha\in\Lambda\}\right\rangle=\Aaa$, where
$\{\Pee_\alpha:\alpha\in\Lambda\}\subseteq\Cee$. Then, according to Proposition \ref{pc6} each $\Aaa\in\mathbb S$ has the property $\Seq$. Hence, the
space $X/\Aaa$ is Tychonoff and the map $q_\Aaa: X\to X/\Aaa$  is continuous, see \cite[Theorem 5]{kp8}.
 We put
$$\Psi=\{q_\Aaa:\Aaa\in\mathbb{S}\}.$$
  First of all, since $\Aaa\subset_c \COZ(X)$ for all $\Aaa\in\mathbb{S}$, according to \cite[Proposition 7]{kp8}, $\Psi$ consists of skeletal maps. Let us check that $\Psi$ is a multiplicative lattice.
Indeed, suppose that $f\colon X\to f[X]$ is a map. Let $\Bee\subset\COZ(f[X])$ be a base for the topology on $f[X]$ having the least cardinality. Since $\Cee$ is club, for every $V\in\Bee$ there exists $\Pee_V\in\Cee$ with $f^{-1}(V)\in\Pee_V$. Then the family
  $$\Aaa_\Bee=\langle\bigcup\{\Pee_V:V\in\Bee\}\rangle\in\mathbb S$$  contains $\{f^{-1}(V):V\in\Bee\}$.
  Hence,
$q_{\Aaa_\Bee}^{-1}(q_{\Aaa_\Bee}(x))\subseteq f^{-1}(f(x))$, for each $x\in X.$ So, we can  put $h(q_{\Aaa_\Bee}(x))=f(x)$ for each $x\in X$. By Lemma \ref{l4}, the function $h:X/\Aaa_\Bee\to f[X]$ is continuous because  $h^{-1}(V)=q_{\Aaa_\Bee}[f^{-1}(V)]$ is open in $X/\Aaa_\Bee$ for each $V\in\Bee$. Moreover, 
$|\Aaa_\Bee|=|\Bee|$, so $w(q_{\Aaa_\Bee}[X])= w(f[X])$. Thus, (L1) is fulfilled.

To verify (L2), fix $\Jee\subseteq \mathbb{S}$ and consider $\{q_{\Bee}:\Bee\in \Jee\}\subset\Psi$. We put $$\Bee_\Jee=\bigcup\{\left\langle \Bee\right\rangle:\Bee\in\Jee\}=\bigcup\{\Bee:\Bee\in\Jee\}\hbox{~}\mbox{and}\hbox{~}
\Aaa_\Jee=\left\langle \Bee_\Jee\right\rangle.$$ By  Theorem  \ref{t5}, the map $$\triangle\{q_\Bee:\Bee\in \Jee\}: X\to \triangle\{q_\Bee:\Bee\in \Jee\}[X]\subseteq\prod\{X/\Bee: \Bee\in\Jee\}$$ is homeomorphic to the map
$q_{\Aaa_\Jee}:X\to X/\Aaa_\Jee.$ Finally, because $\Aaa_\Jee\in\mathbb{S}$ we have $\triangle\{q_\Bee:\Bee\in \Jee\}\in\Psi$.
\end{proof}

We are now in a position to characterize skeletally Dugundji spaces. The next lemma is used in the proof of that characterization.

\begin{lem}\label{l} If $\Aaa \subseteq\COZ(X)$ and  $\Vee \subseteq  \left\langle \Aaa \right\rangle$ is a finite family, then there exists a finite family $\Wee \subseteq  \Aaa $  such that  $\left\langle \Vee \right\rangle \subseteq \left\langle \Wee \right\rangle \subseteq \left\langle \Aaa\right\rangle.$
 \end{lem}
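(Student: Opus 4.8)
The plan is to reduce the statement to a ``finite support'' observation. First note that the right-hand inclusion $\left\langle \Wee \right\rangle \subseteq \left\langle \Aaa \right\rangle$ is automatic for any $\Wee \subseteq \Aaa$, since the operation $\Aaa \mapsto \left\langle \Aaa \right\rangle$ is monotone (it produces the least family containing its argument that is closed under finite unions and finite intersections). Likewise, to obtain the left-hand inclusion $\left\langle \Vee \right\rangle \subseteq \left\langle \Wee \right\rangle$ it suffices to arrange $\Vee \subseteq \left\langle \Wee \right\rangle$, because then $\left\langle \Vee \right\rangle \subseteq \left\langle \left\langle \Wee \right\rangle \right\rangle = \left\langle \Wee \right\rangle$. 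So the whole content of the lemma is to find a single finite $\Wee \subseteq \Aaa$ with $\Vee \subseteq \left\langle \Wee \right\rangle$.

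The key step I would isolate is the following claim: for every $V \in \left\langle \Aaa \right\rangle$ there is a finite family $\Wee_V \subseteq \Aaa$ with $V \in \left\langle \Wee_V \right\rangle$. I would prove this by induction on the least $n$ with $V \in \left\langle \Aaa \right\rangle^{n}$, using exactly the stratified construction of $\left\langle \Aaa \right\rangle$ recalled after Lemma \ref{l4}. For $n = 0$ we have $V \in \Aaa$ and may take $\Wee_V = \{V\}$. For the inductive step, if $V \in \left\langle \Aaa \right\rangle^{k+1} \setminus \left\langle \Aaa \right\rangle^{k}$, then $V = \bigcup \Uee$ or $V = \bigcap \Uee$ for some finite $\Uee \subseteq \left\langle \Aaa \right\rangle^{k}$; applying the inductive hypothesis to each $U \in \Uee$ yields finite families $\Wee_U \subseteq \Aaa$ with $U \in \left\langle \Wee_U \right\rangle$, and I would set $\Wee_V = \bigcup \{\Wee_U : U \in \Uee\}$. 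This $\Wee_V$ is finite, being a finite union of finite sets, and by monotonicity each $U$ lies in $\left\langle \Wee_V \right\rangle$; since $\left\langle \Wee_V \right\rangle$ is closed under finite unions and finite intersections, the set $V$ (as $\bigcup \Uee$ or $\bigcap \Uee$) belongs to $\left\langle \Wee_V \right\rangle$.

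Finally, I would assemble the conclusion. Writing the given finite family as $\Vee = \{V_1, \dots, V_m\}$, I set $\Wee = \bigcup \{\Wee_{V_i} : 1 \leq i \leq m\}$, a finite subfamily of $\Aaa$. Each $V_i$ lies in $\left\langle \Wee_{V_i} \right\rangle \subseteq \left\langle \Wee \right\rangle$, so $\Vee \subseteq \left\langle \Wee \right\rangle$ and hence $\left\langle \Vee \right\rangle \subseteq \left\langle \Wee \right\rangle$; together with the automatic inclusion $\left\langle \Wee \right\rangle \subseteq \left\langle \Aaa \right\rangle$ this gives the lemma.

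I do not anticipate a genuine obstacle here: the argument is a standard ``finitely many generators suffice'' fact, and the only point requiring a little care is to run the induction against the stratification $\left\langle \Aaa \right\rangle = \bigcup \{\left\langle \Aaa \right\rangle^{n} : 0 \leq n\}$ rather than against the unstructured closure, so that the statement ``each element is produced in finitely many steps from finitely many members of $\Aaa$'' is literally available from the recursive definition.
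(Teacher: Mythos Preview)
Your proof is correct and follows essentially the same approach as the paper: both arguments descend along the stratification $\left\langle \Aaa \right\rangle = \bigcup_n \left\langle \Aaa \right\rangle^{n}$ to extract a finite generating set from $\Aaa$. The only cosmetic difference is that the paper processes the whole finite family $\Vee$ at once, producing a chain $\Vee \subseteq \left\langle \Uee_1 \right\rangle \subseteq \cdots \subseteq \left\langle \Uee_m \right\rangle$ with $\Uee_i \subseteq \left\langle \Aaa \right\rangle^{m-i}$, whereas you first prove the single-element case by induction and then take the union over $\Vee$.
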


 \begin{proof} Take $m > 0$ such that $\Vee \subseteq \left\langle \Aaa \right\rangle^{m}.$  If $Y\in \left\langle \Aaa \right\rangle^{q}\setminus \left\langle \Aaa \right\rangle^{q-1}$, where $1\leq q \leq m$,  then there exists an action  $\sigma_n$ and a finite family   $\Uee_Y \in  [\left\langle \Aaa \right\rangle^{q-1}]^{<\omega} $ such that $\sigma_n (\Uee_Y) =Y.$
Let $\Uee_1$ be the union of all families $\Uee_Y$ and $\Vee\cap\left\langle\Aaa\right\rangle^{m-1}$, where $Y\in\Vee\setminus\left\langle\Aaa\right\rangle^{m-1}$. Thus
 $ \Uee_1 \subseteq \left\langle \Aaa \right\rangle^{m-1}$ is a finite family and $\Vee \subseteq \left\langle \Uee_1 \right\rangle. $  Then,  repeat the procedure with $\Uee_1$ instead of $\Vee$, and so on. As a result, we get finite families $ \Uee_i \subseteq \left\langle \Aaa \right\rangle^{m-i}$ such that $$ \Vee \subseteq \left\langle \Uee_1 \right\rangle \subseteq \left\langle \Uee_2 \right\rangle \subseteq \ldots \subseteq \left\langle \Uee_m \right\rangle.$$ Since $\Uee_m \subseteq \Aaa$, the proof is finished.
  \end{proof}

\begin{thm}\label{t8*}
A Tychonoff space $X$ has  a multiplicative lattice of skeletal maps if and only if there exists an additive $c$-club $\Cee\subseteq \COZ(X)$.
\end{thm}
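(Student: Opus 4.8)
The forward implication is essentially Theorem \ref{888}: a multiplicative lattice of skeletal maps restricts, by keeping exactly those of its members that map onto second countable spaces, to an $\omega$-multiplicative lattice of skeletal maps, so Theorem \ref{888} already furnishes an additive $c$-club $\Cee$. Thus the whole content of the statement lies in the converse. Here I would like to invoke Proposition \ref{t88}, but its hypothesis is stronger than merely having an additive $c$-club: it demands that $\langle \bigcup \Raa \rangle$ have property $\Seq$ and satisfy $\langle \bigcup \Raa \rangle \subset_c \COZ(X)$ for \emph{every} $\Raa \subseteq \Cee$. So the plan is to show that an arbitrary additive $c$-club automatically meets this stronger requirement; Proposition \ref{t88} then produces the desired multiplicative lattice of skeletal maps and closes the argument.

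The property $\Seq$ is the easy half: each $\Pee \in \Raa \subseteq \Cee$ has property $\Seq$ by clause (iii) of the definition of a $c$-club, so Proposition \ref{pc6} yields property $\Seq$ for $\langle \bigcup \Raa \rangle$ at once. The real work is to establish $\langle \bigcup \Raa \rangle \subset_c \COZ(X)$ for an arbitrary, possibly uncountable, $\Raa$. Write $\Aaa = \langle \bigcup \Raa \rangle$; it is closed under finite unions and intersections, so condition (c) is the thing to verify. I would argue by contradiction: suppose (c) fails, and fix a nonempty $V \in \COZ(X)$ such that for every $W \in \Aaa$ there is a witness $U_W \in \Aaa$ with $U_W \subseteq W$ and $U_W \cap V = \varnothing$.

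The heart of the proof is then a countable closing-off (reflection) that localizes this bad behaviour inside a single member of $\Cee$. Each member of $\Cee$ is a countably infinite family, hence $\langle \bigcup \Raa_0 \rangle$ is countable whenever $\Raa_0 \subseteq \Raa$ is countable. Starting from a countable $\Raa_0^{(0)} \subseteq \Raa$, at stage $n$ I run over the (countably many) $W$ in $\Aaa_n = \langle \bigcup \Raa_0^{(n)} \rangle$; by Lemma \ref{l} the witness $U_W$ lies in $\langle \Wee_W \rangle$ for some finite $\Wee_W \subseteq \bigcup \Raa$, and I enlarge $\Raa_0^{(n)}$ by the finitely many members of $\Raa$ that contain an element of $\Wee_W$, obtaining a countable $\Raa_0^{(n+1)}$. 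Setting $\Raa_0 = \bigcup_n \Raa_0^{(n)}$ and $\Aaa_\infty = \langle \bigcup \Raa_0 \rangle = \bigcup_n \Aaa_n$, the construction guarantees that $\Aaa_\infty$ is $V$-self-refining: for each $W \in \Aaa_\infty$ its witness $U_W$ already belongs to $\Aaa_\infty$. Since $\Raa_0 \subseteq \Cee$ is countable, the countable form of additivity gives $\Aaa_\infty \in \Cee$, whence $\Aaa_\infty \subset_c \COZ(X)$ by clause (iii). But $\Dee_V = \{U \in \Aaa_\infty : U \cap V = \varnothing\}$ is predense in $\Aaa_\infty$ (each witness $U_W \in \Dee_V$ refines $W$), while $\bigcup \Dee_V$ misses the nonempty open set $V$ and so is not dense, i.e. $\Dee_V$ is not predense in $\COZ(X)$. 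This contradicts Lemma \ref{rem*} applied to $\Aaa_\infty$, and the contradiction establishes $\Aaa \subset_c \COZ(X)$.

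The step I expect to be the main obstacle is exactly this reflection: one must arrange the closing-off so that it simultaneously stays countable (so that countable additivity applies and $\langle \bigcup \Raa_0 \rangle \in \Cee$) and is genuinely closed, that is, traps every witness $U_W$ inside $\Aaa_\infty$. Lemma \ref{l} is the decisive device, since it lets me capture each witness using only finitely many members of $\Raa$, keeping each stage countable; the rest is bookkeeping. Once the stronger condition on $\Cee$ is in hand, the theorem follows by chaining Theorem \ref{888} with Proposition \ref{t88}.
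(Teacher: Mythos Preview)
Your proposal is correct but takes a genuinely different route from the paper at the key step, namely showing $\left\langle\bigcup\Raa\right\rangle\subset_c\COZ(X)$ for arbitrary $\Raa\subseteq\Cee$. The paper argues via Lemma~\ref{rem1}: it takes an arbitrary $\Wee\subseteq\left\langle\bigcup\Raa\right\rangle$ predense in $\left\langle\bigcup\Raa\right\rangle$, invokes the countable chain condition (available because the existence of a $c$-club makes $X$ I-favorable, by \cite[Theorems 1.1 and 1.6]{dkz}) to extract a countable $\Wee^*\subseteq\Wee$ with $\overline{\bigcup\Wee^*}=\overline{\bigcup\Wee}$, and then applies Lemma~\ref{l} just once to trap $\Wee^*$ inside some $\Pee_\Wee=\left\langle\bigcup\{\Pee_n:n\geq 0\}\right\rangle\in\Cee$; predensity of $\Wee^*$ in $\Pee_\Wee$ and Lemma~\ref{rem*} then finish. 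You instead negate condition~(c) directly and run an $\omega$-stage closing-off, using Lemma~\ref{l} at every stage to keep the family countable while absorbing the witnesses $U_W$; this produces a single $\Aaa_\infty\in\Cee$ in which the failure of (c) reflects, and Lemma~\ref{rem*} yields the contradiction. Your route is more self-contained---it does not need the external ccc input from \cite{dkz}---at the cost of a recursive construction; the paper's route is shorter once ccc is granted. The global architecture (forward direction via Theorem~\ref{888}, converse via Proposition~\ref{t88} after verifying its extra hypotheses, with Proposition~\ref{pc6} handling $\Seq$) is identical in both.
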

\begin{proof}
According to Theorem \ref{888}, every space with a multiplicative lattice of skeletal maps has an additive $c$-club. To establish the inverse implication, suppose that $X$ has an additive $c$-club $\Cee$. So, if $\Raa  \subseteq \Cee$, then  $\left\langle \bigcup \Raa  \right\rangle $ has the property $\Seq$, because of Proposition \ref{pc6}.
In view of Proposition \ref{t88}, it remains to show that  $\left\langle \bigcup \Raa  \right\rangle \subset_c \COZ(X)$, for each family $\Raa  \subseteq \Cee$. To this end, assume that $\Wee \subseteq \left\langle \bigcup \Raa  \right\rangle$ is predense in $\left\langle \bigcup \Raa  \right\rangle$. Next, choose a countable subfamily $\Wee^* \subseteq \Wee$ such that $\overline{\cup \Wee^*}= \overline{\cup \Wee}$. Because \cite[Theorems 1.1 and 1.6]{dkz}, the space $X$ satisfies the countable chain condition, so the choice of $\Wee^*$ is possible. According to Lemma \ref{l}, for every $W\in\Wee^*$ there exists countably many
elements $\Pee_{\alpha_1}, \ldots ,\Pee_{\alpha_k}$ of $\Raa$ such that $W\in\left\langle\Pee_{\alpha_1}\cup \ldots \cup \Pee_{\alpha_k}\right\rangle$.
So, there exist families $\Pee_n \in \Raa$ such that $$\Wee^* \subseteq\Pee_\Wee=\left\langle\bigcup \{\Pee_n: 0\leq n\}\right\rangle \in \Cee.$$ The family $\Wee^*$ has to be predense in $\left\langle\bigcup \{\Pee_n: 0\leq n\}\right\rangle$. Otherwise, we can find
$V\in \left\langle\bigcup \{\Pee_n: 0\leq n\}\right\rangle$ such that $V \cap \cup \Wee^* = \varnothing$. In this case, $V\cap \overline{\cup \Wee}=\varnothing$. But this contradicts  that $\Pee_\Wee\subseteq\left\langle \bigcup \Raa  \right\rangle$ and 
$\Wee$ is  predense in $\left\langle \bigcup \Raa  \right\rangle$. In view of Lemma \ref{rem*},  the family $\Wee^*$, being predense in $\left\langle\bigcup \{\Pee_n: 0\leq n\}\right\rangle \subset_c \COZ(X)$, has to be predense in $\COZ(X)$. Since predensity of any family in $\COZ(X)$ is equivalent to the density of the family in $X$, $ \Wee$ is also predense in $\COZ(X)$. Finally, by Lemma \ref{rem1},  we get
 $\left\langle \bigcup \Raa  \right\rangle \subset_c \COZ(X)$.
\end{proof}

\begin{cor}\label{c8*}
A Tychonoff space   is skeletally Dugundji  if and only if there exists an additive c-club.
\end{cor}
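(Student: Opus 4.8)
The plan is to derive this corollary immediately from Theorem~\ref{t8*} by invoking the definition of skeletally Dugundji spaces. Recall from \cite{kpv1} that a Tychonoff space $X$ is skeletally Dugundji precisely when it carries a multiplicative lattice of skeletal maps. Thus the two properties ``skeletally Dugundji'' and ``possesses a multiplicative lattice of skeletal maps'' are interchangeable, and the corollary is obtained simply by substituting the former for the latter in the statement of Theorem~\ref{t8*}.

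Concretely, I would proceed in a single step, treating the biconditional in both directions. If $X$ is skeletally Dugundji, then by \cite{kpv1} it has a multiplicative lattice of skeletal maps, whence Theorem~\ref{t8*} furnishes an additive $c$-club $\Cee\subseteq\COZ(X)$. Conversely, if an additive $c$-club exists, then the same theorem guarantees a multiplicative lattice of skeletal maps on $X$, and so $X$ is skeletally Dugundji again by \cite{kpv1}. This closes the equivalence.

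I do not anticipate any genuine obstacle, since all the substantive work has already been absorbed into Theorem~\ref{t8*}, and the corollary merely relabels that equivalence in the terminology of \cite{kpv1}. It is worth noting where the real content lives: the forward direction of Theorem~\ref{t8*} is Theorem~\ref{888}, which builds the additive $c$-club from an $\omega$-multiplicative lattice (using Proposition~\ref{pc6} to secure property $\Seq$ and \cite[Proposition 7]{kp8} to secure the relation $\subset_c\COZ(X)$), while the reverse direction combines Proposition~\ref{t88} with Lemma~\ref{l} to reconstruct the lattice of quotient maps $q_\Aaa$ from the club. The corollary itself requires no further argument.
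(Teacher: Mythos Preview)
Your proposal is correct and matches the paper's own approach: the corollary is stated without proof immediately after Theorem~\ref{t8*}, since it follows at once from that theorem together with the definition of skeletally Dugundji spaces recalled from \cite{kpv1}. Your additional remarks on where the real work is done (Theorems~\ref{888} and~\ref{t8*}, Proposition~\ref{t88}, Lemma~\ref{l}) are accurate but not needed for the corollary itself.
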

\begin{cor} A Tychonoff space has $\omega$-multiplicative lattice of skeletal maps if and only if it has multiplicative lattice of skeletal maps.
\end{cor}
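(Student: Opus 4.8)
The plan is to prove the two implications separately, relying on the characterizations already established. For the implication from $\omega$-multiplicative to multiplicative no fresh argument is required: if $X$ carries an $\omega$-multiplicative lattice of skeletal maps, then Theorem \ref{888} produces an additive $c$-club on $X$, and Theorem \ref{t8*} (its implication from additive $c$-clubs back to lattices) then yields a multiplicative lattice of skeletal maps. One only needs to observe that the chain $\omega$-multiplicative $\to$ additive $c$-club $\to$ multiplicative is non-circular, since both Theorem \ref{888} and Theorem \ref{t8*} are established without any appeal to the present corollary.

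For the reverse implication, suppose $\Psi$ is a multiplicative lattice of skeletal maps on $X$, and set
$$\Psi_\omega=\{\phi\in\Psi: \phi[X]\ \text{is second countable}\}.$$
Since $\Psi_\omega\subseteq\Psi$, each of its members is skeletal, so it remains to verify ($\omega$-L1) and ($\omega$-L2) for $\Psi_\omega$. For ($\omega$-L1), let $f\colon X\to f[X]$ be a map with $f[X]$ second countable; applying (L1) to $f$ gives $\phi\in\Psi$ with $\phi\prec f$ and $\w(\phi[X])\leq\w(f[X])=\omega$, so $\phi[X]$ is second countable and $\phi\in\Psi_\omega$, as needed. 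For ($\omega$-L2), take $\{\phi_n:0\leq n\}\subseteq\Psi_\omega$; condition (L2) delivers an element of $\Psi$ homeomorphic to the diagonal $\triangle\{\phi_n:0\leq n\}$. The image of this diagonal embeds into the countable product $\prod\{\phi_n[X]:0\leq n\}$ of second countable spaces, hence is itself second countable, and therefore the homeomorphic element of $\Psi$ lies in $\Psi_\omega$. Thus $\Psi_\omega$ is an $\omega$-multiplicative lattice of skeletal maps.

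I expect no serious obstacle in this argument; the only delicate point is the weight bookkeeping in the reverse implication, namely checking that the maps returned by (L1) and (L2) really have second countable images and so fall inside $\Psi_\omega$. This is guaranteed precisely by the weight inequality built into (L1) and by the fact that a countable product of second countable spaces is second countable, which is what forces the diagonal in ($\omega$-L2) back into $\Psi_\omega$.
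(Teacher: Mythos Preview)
Your proof is correct and matches the paper's implicit reasoning: the corollary is stated without proof precisely because the forward implication is the composite of Theorem~\ref{888} (first part) and Theorem~\ref{t8*}, while the reverse implication is the elementary restriction $\Psi_\omega=\{\phi\in\Psi:\phi[X]\text{ second countable}\}$, which the paper itself uses in the last paragraph of the proof of Theorem~\ref{888}. Your weight bookkeeping for ($\omega$-L1) and ($\omega$-L2) is exactly what is needed, and there is nothing to add.
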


\section{Applications with  $d$-open and open maps }

In accordance with \cite{kpv}, we say that a  space $X$ is \textit{$d$-openly generated} if and only if  the family $$\{\Pee\in [\COZ(X)]^{\omega}:\Pee\subset_!\COZ(X)\}$$ contains a club. Here, $\Pee\subset_!\COZ(X)$ means that for any $\See\subset \Pee$ and $x
\not\in\overline{\bigcup\See}$, there exists $W\in\Pee$ such that $x\in W$
and $W\cap\bigcup \See=\varnothing$. It easily seen that
$\Pee\subset_!\COZ(X)$ implies $\Pee\subset_c\COZ(X)$. A club $\Cee$ will be called a
 \textit{$d$-club} provided for each $\Pee \in \Cee$ satisfies the following conditions: $\Pee \subset_! \COZ(X)$, $P$ has the property $\Seq$ and $P$ is closed under finite unions and finite intersections. As in the definition of $c$-clubs,
   a $d$-club  $\Cee$  is said to be \textit{additive d-club}, whenever $\left\langle \Aaa_1 \cup \Aaa_2 \right\rangle\in\Cee$ for all   $\Aaa_1, \Aaa_2 \in \Cee$.
Obviously, each additive $d$-club $\Cee$ has the following property:  $\left\langle \bigcup \{\Aaa_n: 0\leq n\} \right\rangle \in \Cee$ for any   $\{\Aaa_n: 0 \leq  n\} \subset \Cee$.
Furthermore, if $\Psi$ is a  ($\omega$-multiplicative) multiplicative lattice consisting  of  $d$-open  maps on $X$, then we say that $X$ has a \textit{multiplicative lattice of $d$-open  maps} ($\omega$-\textit{multiplicative lattice of $d$-open  maps}). Similarly, we define multiplicative and
$\omega$-multiplicative lattices of  open  maps on X.
Let us note that, a compact Hausdorff space has a multiplicative  lattice of open  maps if and only if it is a Dugundji  space. We  refer to the papers \cite{s81} and \cite{kpv1} for more details concerning Dugundji  and skeletally Dugundji spaces.

\begin{lem}\label{c13}
For any map $f:X\to Y$ the following conditions are equivalent:
\begin{itemize} 
\item[(1)] $f$ is d-open;
\item[(2)] There is a base $\Bee$ of $Y$ such that $\{f^{-1}(V):V\in \Bee\}\subset_!\COZ(X)$;
\item[(3)] $\{f^{-1}(V):V\in \Bee\}\subset_!\COZ(X)$ for every base $\Bee$ of $Y$.
\end{itemize}
\end{lem}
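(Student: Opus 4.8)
The plan is to prove the cycle $(1)\Rightarrow(3)\Rightarrow(2)\Rightarrow(1)$, in which $(3)\Rightarrow(2)$ is essentially free, so that only the two substantive implications $(1)\Rightarrow(3)$ and $(2)\Rightarrow(1)$ carry content. The common device throughout is a dictionary between subfamilies of $\Pee:=\{f^{-1}(V):V\in\Bee\}$ and open subsets of $Y$: every $\See\subseteq\Pee$ has the form $\{f^{-1}(V):V\in\Bee'\}$ for $\Bee':=\{V\in\Bee:f^{-1}(V)\in\See\}$, so $\bigcup\See=f^{-1}(G)$ with $G=\bigcup\Bee'$ open in $Y$; conversely every open $G\subseteq Y$ arises this way by taking $\Bee'=\{V\in\Bee:V\subseteq G\}$. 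Under this translation the clauses of $\subset_!$ become statements about $f[U]$, $\overline{f[U]}$ and open subsets of $Y$, which is precisely the language of $d$-openness. I would also record the routine identities $f^{-1}(V)\cap f^{-1}(G)=f^{-1}(V\cap G)$ and ``$U\cap f^{-1}(G)=\varnothing$ iff $f[U]\cap G=\varnothing$'', together with the observation that $f[U]\cap G=\varnothing$ forces $\overline{f[U]}\cap G=\varnothing$ whenever $G$ is open.

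For $(1)\Rightarrow(3)$, fix an arbitrary base $\Bee$ and let $\See\subseteq\Pee$, $x\notin\overline{\bigcup\See}$; put $G=\bigcup\Bee'$ as above, so $\bigcup\See=f^{-1}(G)$. Choose an open $U\ni x$ with $U\cap f^{-1}(G)=\varnothing$, hence $f[U]\cap G=\varnothing$ and therefore $\overline{f[U]}\cap G=\varnothing$. Since $f$ is $d$-open, $f(x)\in\int\overline{f[U]}$, an open set disjoint from $G$; picking $V\in\Bee$ with $f(x)\in V\subseteq\int\overline{f[U]}$ gives $V\cap G=\varnothing$, so $W:=f^{-1}(V)\in\Pee$ satisfies $x\in W$ and $W\cap\bigcup\See=f^{-1}(V\cap G)=\varnothing$. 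This is exactly the witness demanded by $\subset_!$. Note this implication uses only that $\Bee$ is a base, and no surjectivity.

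For $(2)\Rightarrow(1)$ I argue by contraposition. If $f$ is not $d$-open, fix open $U$ and $x\in U$ with $f(x)\notin\int\overline{f[U]}$; since $f(x)\in f[U]\subseteq\overline{f[U]}$, the point $f(x)$ lies on the boundary of $\overline{f[U]}$, so $f(x)\in\overline{G}$ for the open set $G=Y\setminus\overline{f[U]}$. Because $U\subseteq f^{-1}(\overline{f[U]})$ we get $U\cap f^{-1}(G)=\varnothing$, whence $x\notin\overline{f^{-1}(G)}=\overline{\bigcup\See}$ with $\See=\{f^{-1}(V):V\in\Bee,\ V\subseteq G\}$. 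Applying $\subset_!$ produces $V_0\in\Bee$ with $f(x)\in V_0$ and $f^{-1}(V_0)\cap f^{-1}(G)=f^{-1}(V_0\cap G)=\varnothing$, i.e. $V_0\cap G\cap f[X]=\varnothing$. But $V_0$ is an open neighborhood of $f(x)\in\overline{G}$, so $V_0\cap G$ is a nonempty open subset of $Y$; as $f$ has dense image it must meet $f[X]$, contradicting $V_0\cap G\cap f[X]=\varnothing$. Hence $f$ is $d$-open. Finally $(3)\Rightarrow(2)$ follows by specializing to any base of $Y$ consisting of co-zero sets (such a base exists since $Y$ is Tychonoff), which guarantees $\Pee\subseteq\COZ(X)$.

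The main obstacle, and the step deserving care, is the appeal to density of $f[X]$ in $(2)\Rightarrow(1)$: the relation $\subset_!$ is a condition inside $X$ and therefore only controls $f[X]$, so to convert $V_0\cap G\cap f[X]=\varnothing$ into a genuine contradiction with $f(x)\in\overline{G}$ one needs every nonempty open subset of $Y$ to meet $f[X]$. I would accordingly make explicit that $f$ is taken onto $Y$ (equivalently $Y=f[X]$, as for the lattice maps $q_\Pee\colon X\to X/\Pee$ of the earlier sections); for surjective $f$ one has $V_0\cap G\cap f[X]=V_0\cap G$ and the contradiction is immediate. A secondary, purely bookkeeping, point concerns the symbol $\COZ(X)$ in $\subset_!\COZ(X)$: the density content of all three conditions is base-independent, and co-zero sets enter only to secure $\Pee\subseteq\COZ(X)$, which is exactly why a co-zero base is selected in $(3)\Rightarrow(2)$.
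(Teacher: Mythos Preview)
Your proof is correct and self-contained, whereas the paper's proof proceeds almost entirely by citation: it invokes \cite[Remark~2.2]{kpv} for the equivalence $(1)\Leftrightarrow(2)$, notes $(3)\Rightarrow(2)$ is trivial, and for $(1)\Rightarrow(3)$ appeals to \cite[Lemma~2.1]{kpv} (which gives $\{f^{-1}(V):V\in\mathcal T_Y\}\subset_!\mathcal T_X$ for $d$-open $f$) and then remarks that this ``easily implies'' the base version. You instead unfold both substantive implications directly, translating the $\subset_!$ clause into statements about $f[U]$ and $\overline{f[U]}$ via the dictionary $\bigcup\See=f^{-1}(G)$; this is essentially what the cited results of \cite{kpv} contain, so the underlying mathematics is the same, but your write-up is independent of that reference. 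A concrete gain of your approach is that it surfaces the surjectivity (or density of image) hypothesis needed in $(2)\Rightarrow(1)$, and the bookkeeping about whether preimages are co-zero sets; both issues are invisible in the paper's proof because they are absorbed into the citations.
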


\begin{proof}
According to \cite[Remark 2.2]{kpv}, the first two items are equivalent. Since $(3)$ implies $(2)$, it suffices to show that $(1)$ implies $(3)$.  
So, suppose $f$ is $d$-open. Then, by \cite[Lemma 2.1]{kpv}, 
$\{f^{-1}(V):V\in\mathcal T_Y\}\subset_! \mathcal T_X$, where $\mathcal T_X$ and $\mathcal T_X$ denote the topologies of $X$ and $Y$, respectively. The last relation easily implies that $\{f^{-1}(V): V \in \Bee \} \subset_! \COZ(X)$ for every base $\Bee$ of $Y$.
\end{proof}

Here is the characterization of spaces with lattices of $d$-open maps.
\begin{thm}\label{t9}
For a Tychonoff space $X$ the following conditions are equivalent:
\begin{itemize}
\item[(1)] $X$ has an $\omega$-multiplicative lattice of $d$-open maps;
\item[(2)] There exists an additive $d$-club for $X$;
\item[(3)] $X$ has a multiplicative lattice of $d$-open maps.
\end{itemize}
\end{thm}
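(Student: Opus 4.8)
The plan is to prove the three conditions equivalent cyclically, as $(3)\Rightarrow(1)\Rightarrow(2)\Rightarrow(3)$, following the skeletal development almost verbatim. The implication $(1)\Rightarrow(2)$ mirrors Theorem \ref{888}, while $(2)\Rightarrow(3)$ combines the construction of Proposition \ref{t88} with the ccc argument from the proof of Theorem \ref{t8*}. Throughout, the single structural change relative to the skeletal case is that the relation $\subset_c\COZ(X)$ is replaced by $\subset_!\COZ(X)$, and the detection of skeletal maps via \cite[Proposition 7]{kp8} is replaced by the detection of $d$-open maps via Lemma \ref{c13}. The implication $(3)\Rightarrow(1)$ is the routine one: given a multiplicative lattice $\Psi$ of $d$-open maps, I would set $\Psi_\omega=\{\phi\in\Psi:\phi[X]\text{ is second countable}\}$. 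Condition ($\omega$-L1) follows from (L1), since the map it produces satisfies $\w(\phi[X])\leq\w(f[X])=\omega$; condition ($\omega$-L2) follows from (L2), since a countable diagonal of maps onto second countable spaces again lands in a second countable space, hence in $\Psi_\omega$. As every member of $\Psi_\omega$ is $d$-open, $(1)$ holds.

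For $(1)\Rightarrow(2)$ I would repeat the construction of Theorem \ref{888}: for each $\phi\in\Psi_\omega$ fix a countable base $\Bee_\phi$ of $\phi[X]$ with property $\Seq$ that is closed under finite unions and intersections, and put $\Cee=\{\{\phi^{-1}(V):V\in\Bee_\phi\}:\phi\in\Psi_\omega\}$. That each $\Pee\in\Cee$ is closed under finite operations and has property $\Seq$, that $\Cee$ is closed under countable diagonals (condition $(*)$, via ($\omega$-L2) and Proposition \ref{pc6}), that $\Cee$ satisfies cofinality (ii) (via ($\omega$-L1)), and that $\Cee$ is additive, is argued exactly as in Theorem \ref{888}. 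The only new point is that each $\Pee\in\Cee$ satisfies $\Pee\subset_!\COZ(X)$: since $\phi$ is $d$-open and $\Bee_\phi$ is a base of $\phi[X]$, this is precisely Lemma \ref{c13}. Hence $\Cee$ is an additive $d$-club.

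The substantive direction is $(2)\Rightarrow(3)$, whose heart is the claim that $\langle\bigcup\Raa\rangle\subset_!\COZ(X)$ for every $\Raa\subseteq\Cee$. First note that a $d$-club is a $c$-club, because $\subset_!$ implies $\subset_c$; thus $\Cee$ witnesses that $X$ is I-favorable, so $X$ is ccc by \cite[Theorems 1.1 and 1.6]{dkz}. Now fix $\Raa\subseteq\Cee$ and set $\Pee=\langle\bigcup\Raa\rangle$, which has property $\Seq$ by Proposition \ref{pc6}. To check $\Pee\subset_!\COZ(X)$, take $\See\subset\Pee$ and $x\notin\overline{\bigcup\See}$. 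By ccc choose a countable $\See^*\subseteq\See$ with $\overline{\bigcup\See^*}=\overline{\bigcup\See}$, so $x\notin\overline{\bigcup\See^*}$. By Lemma \ref{l}, each member of the countable family $\See^*$ lies in some $\langle\Pee_{\alpha_1}\cup\dots\cup\Pee_{\alpha_k}\rangle$ with $\Pee_{\alpha_i}\in\Raa$; collecting these countably many members and using that an additive $d$-club is closed under countable unions, I obtain $\Pee_0=\langle\bigcup\{\Pee_n:0\leq n\}\rangle\in\Cee$ with $\See^*\subseteq\Pee_0\subseteq\Pee$. Since $\Pee_0\subset_!\COZ(X)$, there is $W\in\Pee_0$ with $x\in W$ and $W\cap\bigcup\See^*=\varnothing$. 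Because $W$ is open, $W\cap\overline{\bigcup\See^*}=\varnothing$, and as $\overline{\bigcup\See^*}=\overline{\bigcup\See}\supseteq\bigcup\See$ this yields $W\cap\bigcup\See=\varnothing$ with $W\in\Pee$, proving $\Pee\subset_!\COZ(X)$. With this claim established I would finish exactly as in Proposition \ref{t88}: let $\mathbb S$ be the collection of all $\Aaa=\langle\bigcup\{\Pee_\alpha:\alpha\in\Lambda\}\rangle$ with $\{\Pee_\alpha\}\subseteq\Cee$ and set $\Psi=\{q_\Aaa:\Aaa\in\mathbb S\}$. Each $\Aaa\in\mathbb S$ has property $\Seq$, so $X/\Aaa$ is Tychonoff and $q_\Aaa$ is continuous by \cite[Theorem 5]{kp8}, with $\{q_\Aaa[U]:U\in\Aaa\}$ a base satisfying $q_\Aaa^{-1}(q_\Aaa[U])=U$; since $\Aaa\subset_!\COZ(X)$ by the claim, Lemma \ref{c13} shows $q_\Aaa$ is $d$-open. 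The verifications of (L1) and (L2) are identical to Proposition \ref{t88}, the latter through Theorem \ref{t5}.

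The one genuinely delicate point is the claim $\langle\bigcup\Raa\rangle\subset_!\COZ(X)$ for arbitrary, possibly uncountable, $\Raa$. Additivity of $\Cee$ keeps only \emph{countable} unions inside $\Cee$, so one cannot simply absorb all of $\Raa$ into a single club member; the ccc reduction to a countable $\See^*$ is precisely what makes the finitary Lemma \ref{l} applicable and keeps the resulting $\Pee_0$ in $\Cee$. The second subtlety is the passage from $W\cap\bigcup\See^*=\varnothing$ to $W\cap\bigcup\See=\varnothing$: this is not delivered directly by $\subset_!$ applied to $\See^*$, and it succeeds only because $W$ is open, so disjointness from $\bigcup\See^*$ upgrades to disjointness from the closure $\overline{\bigcup\See^*}=\overline{\bigcup\See}$. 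I expect these two observations to be the real content; everything else transfers mechanically from the skeletal case.
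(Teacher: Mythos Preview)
Your proposal is correct and follows essentially the same route as the paper: the implications are proved in the same cyclic order, $(1)\Rightarrow(2)$ replicates Theorem \ref{888} with Lemma \ref{c13} supplying $\subset_!$, and $(2)\Rightarrow(3)$ combines the ccc reduction of Theorem \ref{t8*} with the lattice construction of Proposition \ref{t88}, exactly as the paper does via its Claim 3. If anything, your treatment of the passage from $W\cap\bigcup\See^*=\varnothing$ to $W\cap\bigcup\See=\varnothing$ is slightly more explicit than the paper's, which jumps directly to $V\cap\overline{\bigcup\Wee^*}=\varnothing$.
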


\begin{proof}
$(1)\Rightarrow (2)$.
Suppose $X$ has an $\omega$-multiplicative lattice $\Psi_\omega$ of $d$-open maps.  For every $\phi\in\Psi_\omega$ fix an infinite countable  base $\Bee_\phi$ for the space $\phi[X]$ such that $\Bee_\phi$  has the property $\Seq$ and it is closed under finite unions and finite intersections. Since $\phi$ is $d$-open, by Lemma \ref{c13}, we may assume that $\{f^{-1}(V): V \in \Bee \}\subset_! \COZ(X)$.
Let $$\Cee=\{\{\phi^{-1}(V): V \in \Bee_\phi\}:\phi\in\Psi_\omega \}.$$ So, each $\Pee \in \Cee$ is closed under finite union and finite intersections, and it also  has the property $Seq$. As in the proof of Theorem \ref{888}, we can show that $\Cee$ is an additive $d$-club.

 $(2)\Rightarrow (3)$. Suppose there exists an additive $d$-club $\Cee$ on $X$. We need first the following analogue of Proposition \ref{t88}.

\textit{Claim $3$. If  $\left\langle\bigcup\Raa\right\rangle \subset_! \COZ(X)$ for any family $\Raa\subseteq \Cee$, then  $X$  has  a multiplicative lattice of $d$-open maps}.

Denote by $\mathbb{S}$ the collection of all families of the form $\left\langle\bigcup\{\Pee_\alpha:\alpha\in\Lambda\}\right\rangle=\Aaa$, where
$\{\Pee_\alpha:\alpha\in\Lambda\}\subseteq\Cee$. Then, according to Proposition \ref{pc6}, each $\Aaa\in\mathbb S$ has the property $\Seq$. So, the
space $X/\Aaa$ is Tychonoff and the map $q_\Aaa: X\to X/\Aaa$  is continuous. Moreover $\mathcal B_\Aaa=\{q_\Aaa[U]:U\in\Aaa\}$ is a base for $X/\Aaa$
and $q_\Aaa^{-1}(q_\Aaa[U])=U$ for all $U\in\Aaa$.
Let $\Psi=\{q_\Aaa:\Aaa\in\mathbb{S}\}.$
Since $\Aaa=q_\Aaa^{-1}(\mathcal B_\Aaa)\subset_!\COZ(X)$ for all $\Aaa\in\mathbb{S}$, according to Lemma \ref{c13}, $\Psi$ consists of $d$-open maps. Next,
following the proof of Proposition \ref{t88}, we can show that $\Psi$ is a multiplicative lattice on $X$. This provides the proof of Claim 3.

So, according to Claim 3, it suffices to prove that $\left\langle\bigcup\Raa\right\rangle \subset_! \COZ(X)$ for any family $\Raa\subseteq \Cee$. To do this, let $\Wee\subseteq\left\langle\bigcup\Raa\right\rangle$ and $x\not\in\overline{\bigcup\Wee}$. Since $X$ satisfies the countable chain condition (recall that $\Cee$ is also a $c$-club on $X$, so $X$ is an I-favorable space), there is a countable subfamily $\Wee^*\subseteq\Wee$ with
$\overline{\bigcup\Wee^*}=\overline{\bigcup\Wee}$. So, $x\not\in\overline{\bigcup\Wee^*}$. As in the proof of Theorem \ref{t8*}, choose countably many $\Pee_n\in\Raa$ such that  $\Wee^*\subseteq\left\langle\bigcup\{\Pee_n:n\geq 0\}\right\rangle\subseteq\left\langle\bigcup\Raa\right\rangle$. Because $\Pee_\Wee=\left\langle\bigcup\{\Pee_n:n\geq 0\}\right\rangle\in\Cee$, $\Pee_\Wee\subset_!\COZ(X)$. Consequently, there exists $V\in\Pee_\Wee$ with $x\in V$ and
$V\cap\overline{\bigcup\Wee^*}=\varnothing$. This yields that $V\cap\overline{\bigcup\Wee}=\varnothing$. Since $V\in\left\langle\bigcup\Raa\right\rangle$, we finally obtain that $\left\langle\bigcup\Raa\right\rangle \subset_!\COZ(X)$.

The implication $(3)\Rightarrow (1)$ is trivial.
\end{proof}

\begin{cor}
A compact Hausdorff space $X$ is a Dugundji space if and only if  there exists an additive $d$-club for $X$.
\end{cor}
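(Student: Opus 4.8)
The plan is to deduce this Corollary from Theorem \ref{t9} together with the classical description of Dugundji compacta, the bridge being the fact that between compact Hausdorff spaces $d$-openness and openness coincide. So the whole argument is a short reduction rather than a fresh construction.

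First I would recall the two facts already available in the text. On one hand, as noted just before Lemma \ref{c13} (see also \cite{s81}), a compact Hausdorff space $X$ is a Dugundji space if and only if $X$ admits a multiplicative lattice of open maps. On the other hand, openness and $d$-openness are linked as follows: every open map is $d$-open, since if $f$ is open and $U$ is nonempty open then $f[U]$ is open, whence $f[U]=\int f[U]\subseteq\int\overline{f[U]}$; and conversely, by Tkachenko \cite{tk}, any $d$-open map between compact Hausdorff spaces is open. Thus, for maps whose domain and codomain are both compact Hausdorff, ``$d$-open'' and ``open'' mean the same thing.

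Next I would observe that this coincidence applies to all maps occurring in a lattice on a compact Hausdorff $X$. Indeed, if $\phi$ belongs to such a lattice $\Psi$, then $\phi[X]$ is the continuous image of the compact space $X$, hence compact, and by the standing conventions $\phi[X]$ is Tychonoff, so $\phi[X]$ is compact Hausdorff. Consequently each $\phi\in\Psi$ is $d$-open precisely when it is open, and therefore, for compact Hausdorff $X$, a multiplicative lattice of $d$-open maps on $X$ is literally the same object as a multiplicative lattice of open maps on $X$.

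Finally I would chain the equivalences. Combining the two observations, for compact Hausdorff $X$ we have: $X$ is Dugundji $\iff$ $X$ has a multiplicative lattice of open maps $\iff$ $X$ has a multiplicative lattice of $d$-open maps. By the equivalence $(2)\Leftrightarrow(3)$ of Theorem \ref{t9}, the last condition holds if and only if there exists an additive $d$-club for $X$, which yields the stated characterization. The only point demanding care -- and hence the main (mild) obstacle -- is the verification that the codomains arising in the lattice are genuinely compact Hausdorff, so that Tkachenko's theorem may be invoked; once that is settled, the passage from open to $d$-open maps is automatic and the reduction to Theorem \ref{t9} is immediate.
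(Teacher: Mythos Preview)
Your proposal is correct and follows essentially the same route as the paper's own proof: both reduce the Corollary to Theorem \ref{t9} via the Shchepin characterization of Dugundji compacta as those with a multiplicative lattice of open maps, together with Tkachenko's result that $d$-open and open coincide for maps between compact Hausdorff spaces. You supply more detail than the paper does---in particular the verification that the codomains $\phi[X]$ are compact Hausdorff and the trivial implication open $\Rightarrow$ $d$-open---but the underlying argument is identical.
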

\begin{proof} Since compact spaces having a multiplicative lattice of open maps are exactly Dugundji spaces, the corollary follows directly from Theorem \ref{t9} because of every $d$-open map between  compact Hausdorff spaces  is open, see \cite{tk}. \end{proof}


\begin{thebibliography}{40}

\bibitem{dkz}
P.~Daniels,~K.~Kunen and H.~Zhou, \textit{On the open-open game},
Fund. Math. \textbf{145} (1994), no. 3, 205--220.

\bibitem{eng}
R.~Engelking, \textit{General topology}, Polish Scientific
Publishers, Warszawa (1977).

\bibitem{hsh} L.~Heindorf and L.~Shapiro, \textit{Nearly projective Boolean algebras},
Springer-Verlag, Berlin Heidelberg 1994.

\bibitem{mr}
J.~Mioduszewski and L.~Rudolf,  \textit{$H$-closed and extremally
disconnected Hausdorff spaces},  Dissertationes Math. 66 (1969).

\bibitem{kp8}
A.~Kucharski and Sz.~Plewik,  \textit{Inverse systems and
$I$-favorable spaces},  Topology Appl. \textbf{156} (2008), no. 1,
110--116.

\bibitem{kp9}
A.~Kucharski and Sz.~Plewik,  \textit{Skeletal maps and I-favorable
spaces},   Acta Univ. Carolin. Math. Phys. \textbf{51} (2010), 67--72.

\bibitem{kpv}
A.~Kucharski, Sz.~Plewik and V.~Valov,  \textit{Very $I$-favorable spaces}, Topology Appl.
\textbf{158} (2011), 1453--1459.

\bibitem{kpv1}
A.~Kucharski, Sz.~Plewik and V.~Valov,  \textit{Skeletally Dugundji spaces}, Central Europ. J. Math.
\textbf{11} (2013), 1949--1959.

\bibitem{p}
A.~Pe\l czy\'nski, \textit{Linear extensions, linear averagings, and
their applications to linear topological classification of spaces of
continuous functions}, Dissert. Math. \textbf{58} (1968), 1- 89.

\bibitem{s76}
E.~Shchepin, \textit{Topology of limit spaces with uncountable
inverse spectra}, Uspekhi Mat. Nauk \textbf{31} (1976), no. 5(191),
191--226.


\bibitem{s81}
E. V. ~Shchepin, \textit{Functors and uncountable powers of compacta},
Uspekhi Mat. Nauk \textbf{36} (1981), no. 3(219), 3--62.

\bibitem{tk}
M.~Tkachenko, \textit{Some results on inverse spetra II}, Comment.
Math. Univ. Carol. \textbf{22} (1981), no. 4, 819--841.


\bibitem{va2}
V.~Valov,  \textit{A note on spaces with lattices of $d$-open
mappings},  C. R. Acad. Bulgare Sci. 39 (1986), no. 8, 9-12.
\end{thebibliography}
\end{document}